\documentclass{amsart}
\usepackage{amsmath,amssymb,amsthm}

\newtheorem{theo}{Theorem}[section]
\newtheorem{defi}[theo]{Definition}
\newtheorem{lemm}[theo]{Lemma}

\theoremstyle{definition}
\newtheorem{rema}[theo]{Remark}
\newtheorem{exam}[theo]{Example}
\begin{document}

\title{Rohlin Flows on Amalgamated Free Product Factors}
\author{Koichi Shimada}
\email{shimada@ms.u-tokyo.ac.jp}
\address{Department of Mathematical Sciences
University of Tokyo, Komaba, Tokyo, 153-8914, Japan}
\date{}
\begin{abstract}
We construct flows on an amalgamated free product factor and characterize the Rohlin property for them by faithfulness of $\mathbf{R}\times \mathbf{Z}$ actions on a measured space.  As a corollary, we find Rohlin flows on this factor. This is the first example of Rohlin flows on non-McDuff factors. We also also apply Masuda--Tomatsu's theorem  to our Rohlin flows, and obtain a classification up to strong cocycle conjugacy. However, it also turns out that usual  cocycle conjugacy is different from strong cocycle conjugacy for our flows. Consequently, even though Masuda--Tomatsu's analysis is applicable for flows on non-McDuff factors, its effect for classification up to cocycle conjugacy is limited.
\end{abstract}

\maketitle
\section{Introduction}

In this paper, we construct Rohlin flows on amalgamated free product factors and consider a classification of them.

Studying group actions is one of the most interesting topics in operator algebra theory.  After Connes \cite{C}, \cite{C2} has completely classified single automorphisms of the approximately finite dimensional (hereafter abbreviated AFD) $\mathrm{II}_1$ factor up to cocycle conjugacy, the classification of group actions has been remarkably developed; discrete amenable group actions on AFD factors are completely classified by many hands \cite{KtST}, \cite{KwhST}, \cite{M}, \cite{O}, \cite{ST}, \cite{ST2}, and there has been great progress in the classification of compact group actions by Masuda--Tomatsu \cite{MT1}, \cite{MT2}, \cite{MT3}. Then our next interest is the classification of non-compact continuous groups, especially, $\mathbf{R}$-actions (flows). In the study of flows, a great deal of things remain to be done. One of the main topics related to flows is about ``outerness'' for flows. Defining the appropriate ``outerness'' for flows, constructing and classifying ``outer'' flows have been  studied by many people. The first studies of this type have been done by Kawahigashi \cite{Kwh1}, \cite{Kwh2}, \cite{Kwh3}. As in Theorem 1.6 of \cite{Kwh2}, Theorem 16 of \cite{Kwh3}, he constructed flows on the AFD $\mathrm{II}_1$ factor by various ways, and investigated when they are ``very outer'' (which is equivalent to being cocycle conjugate to an infinite tensor product type action with full Connes spectrum). Later, Kishimoto \cite{Ksm} defined the Rohlin property for flows on $C^*$-algebras as in the Introduction of \cite{Ksm}, which is a candidate for ``being very outer'' for flows. Kawamuro \cite{Kwm} defined the Rohlin property for flows on the AFD $\mathrm{II}_1$ factor (Definition 2.2 of \cite{Kwm}). Considering the Rohlin  property plays a very important role in classifications. Recently, Masuda--Tomatsu \cite{MT} established the classification theorem for the Rohlin flows (Theorem 5.14 of \cite{MT}). Remarkably, their theorem is applicable not only for flows on McDuff factors, but also for flows on arbitrary separable von Neumann algebras. However, there are no known examples of Rohlin flows on non-McDuff factors. So, constructing Rohlin flows on non-McDuff factors should lead to interesting observations (See also Problem 8.2. of \cite{MT}).  

\bigskip

The main result of this paper is constructing flows on an amalgamated free product factor (Hereafter abbreviated AFP) and characterize the Rohlin property for them (Theorem \ref{3.1.3}). This yields the first examples of Rohlin flows on non-McDuff factors. Besides constructions, classification of flows is very important.  We also classify Rohlin flows constructed in Theorem \ref{3.1.3}, up to strong cocycle conjugacy (Lemma \ref{4.1.2}), by using Masuda--Tomatsu's theorem. However, a main interest is a classification up to usual cocycle conjugacy. So, it is important to study the difference between cocycle conjugacy and strong cocycle conjugacy, and we show that they are strictly different for our Rohlin flows (Theorem \ref{4.2.2}). Consequently, even though Masuda--Tomatsu's analysis for flows based on the Rohlin property is applicable for flows on non-McDuff factors, its effect is limited and another idea seems to be needed to classify flows on non-McDuff factors.

\section{Preliminaries}
\label{sec2}
In this section, we collect basic facts about Rohlin flows and amalgamated free product factors. Here, $M$ always denotes a separable von Neumann algebra.

\subsection{The Rohlin property}

First, we recall the Rohlin property. Basic references are Chapter 5 of Ocneanu \cite{O}, Kishimoto \cite{Ksm}, Kawamuro \cite{Kwm} and Masuda--Tomatsu \cite{MT}. Let $\omega $ be a free ultrafilter on $\textbf{N} $. We denote by $l^{\infty }(M)$ the $\mathrm{C}^*$-algebra consists of all bouded sequences in $M$. Set
\[ I_{\omega }:=\{ (x_n) \in l^{\infty }(M) | \text{*strong-lim}_{n\to \omega }(x_n)=0 \}, \]
\begin{align*}
 N_{\omega }:=\{ (x_n) \in l^{\infty }(M) | &\text{for all } (y_n)\in I_{\omega }, \\
                                            &\text{ we have } (x_ny_n)\in I_\omega \text{ and }(y_nx_n)\in I_\omega \} ,
 \end{align*}
\[ C_{\omega }:=\{ (x_n) \in l^{\infty }(M) |\text{for all } \phi \in M_{*}, \lim _{n\to \omega }\parallel [\phi ,x_n] \parallel =0 \}. \]
Then we have $I_{\omega }\subset C_{\omega }\subset N_{\omega }$ and $I_{\omega }$ is a closed ideal of $N_{\omega}$. Hence we set $M^{\omega }:=N_{\omega }/I_{\omega }$. Denote the canonical quotient map $N_{\omega }\to M^{\omega }$ by $\pi $. Set $M_{\omega }:=\pi (C_{\omega })$. Then $M_{\omega }$ and $M^{\omega }$ are von Neumann algebras as in Proposition 5.1 of Ocneanu \cite{O}. Let $\alpha $ be an automorphism of $M$. We define an automorphism $\alpha ^{\omega }$ of $M^{\omega }$ by $\alpha^{\omega }((x_n))=(\alpha (x_n))$ for $(x_n)\in M^{\omega}$. Then we have $\alpha^{\omega }(M_{\omega })=M_{\omega }$. By restricting $\alpha ^{\omega }$ to $M_{\omega }$, we obtain an automorphism $\alpha _{\omega }$ of $M_{\omega }$. Hereafter we denote $\alpha ^{\omega }$ and $\alpha _{\omega }$ by $\alpha $ if there is no chance of confusion.

\bigskip 

 Choose a normal faithful state $\varphi $ on $M$. For a flow $\alpha $ on a von Neumann algebra $M$, set 
\begin{multline*}
 M_{\omega ,\alpha }:=\{ (x_n)\in M_\omega |\text{for all } \epsilon >0,\text{ there exists } \delta >0 \text{ such that } \\
      \{n\in \textbf{N}|\ \| \alpha _t(x_n)-x_n\|_{\varphi }^{\sharp } <\epsilon   \ \mathrm{ for }\ |t|<\delta \}\in \omega \}. 
\end{multline*}
Where, $\| x \| _{\varphi }^{\sharp }:=\sqrt{ \varphi (x^*x)+\varphi (xx^*) }$, which metrizes the *strong topology of the unit ball of $M$. This is a von Neumann subalgebra of $M_\omega $ and satisfies $\alpha _t(M_{\omega , \alpha})=M_{\omega , \alpha}$ as in Lemma 3.10 of Masuda--Tomatsu \cite{MT}.

\begin{defi}\textup{(Definition 2.2 of Kawamuro \cite{Kwm}, Definition 4.1 of Masuda--Tomatsu \cite{MT})}

We say that a flow $\alpha$ has the Rohlin property if for any $p\in \textbf{R}$, there exists a unitary $u\in U(M_{\omega,\alpha})$ with $\alpha_t(u)=e^{-ipt}u$ for $t\in\textbf{R}$.
\label{2.3.1}
\end{defi}

As shown in Masuda--Tomatsu \cite{MT}, a Rohlin flow is always centrally free and has full Connes spectrum. So it is reasonable to regard a Rohlin flow as a sufficiently free action.

For Rohlin flows, there is a classification theorem.

\begin{theo}\textup{(Theorem5.14 of Masuda--Tomatsu \cite{MT})}
For two Rohlin flows $\theta ^1$, $\theta ^2$, they are strongly cocycle conjugate if and only if $\theta^1_t \circ \theta^2_{-t}\in \overline{\mathrm{Int}}(M)$ for all $t$.
\label{2.3.4}
\end{theo}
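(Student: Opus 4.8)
The plan is to treat the two implications separately; the forward one is soft and the converse carries the weight.

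Suppose first that $\theta^1$ and $\theta^2$ are strongly cocycle conjugate, so that there are $\sigma\in\overline{\mathrm{Int}}(M)$ and a $\theta^2$-cocycle $(u_t)$ with $\sigma\circ\theta^1_t\circ\sigma^{-1}=\mathrm{Ad}(u_t)\circ\theta^2_t$ for all $t$. Then
\[ \theta^1_t\circ\theta^2_{-t}=\sigma^{-1}\circ\mathrm{Ad}(u_t)\circ\bigl(\theta^2_t\circ\sigma\circ\theta^2_{-t}\bigr). \]
Since $\overline{\mathrm{Int}}(M)$ is a normal subgroup of $\mathrm{Aut}(M)$ (the closure of the normal subgroup $\mathrm{Int}(M)$ in the $u$-topology) and contains each $\mathrm{Ad}(u_t)$, all three factors on the right lie in $\overline{\mathrm{Int}}(M)$, hence so does $\theta^1_t\circ\theta^2_{-t}$ for every $t$.

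For the converse, set $\gamma_t:=\theta^1_t\circ\theta^2_{-t}$; expanding $\theta^1_{s+t}=\theta^1_s\circ\theta^1_t$ shows that $(\gamma_t)$ is an $\overline{\mathrm{Int}}(M)$-valued $1$-cocycle for $\theta^2$, namely $\gamma_{s+t}=\gamma_s\circ(\theta^2_s\circ\gamma_t\circ\theta^2_{-s})$, and the hypothesis says $\gamma_t\in\overline{\mathrm{Int}}(M)$ for all $t$. The first technical step is a cohomology-vanishing lemma extracted from the Rohlin property: I would use the Rohlin unitaries of Definition \ref{2.3.1}, via their spectral decompositions, to build inside the central sequence algebra $M_{\omega,\theta^2}$ approximate Rohlin towers that are almost $\theta^2$-translation invariant on prescribed compact time intervals, and then combine them with the approximate innerness of $\gamma_t$ to replace $(\gamma_t)$, locally uniformly in $t$ and up to conjugation by an approximately inner automorphism, by a genuine unitary (inner) $\theta^2$-cocycle; one checks that the perturbed flow is again strongly continuous and still Rohlin. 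This is the flow analogue of the cocycle-splitting arguments of Ocneanu and Kawahigashi, the continuity in $t$ being precisely the point where honest Rohlin towers, rather than mere central freeness, are required.

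With these tools in hand I would run an Evans--Kishimoto intertwining argument. One builds inductively unitaries $v_n\in U(M)$, keeping the partial products $w_n=v_n\cdots v_1$ controlled in $\|\cdot\|_\varphi^\sharp$ so that $\mathrm{Ad}(w_n)$ converges in $\mathrm{Aut}(M)$ to some $\sigma$, while arranging that $\mathrm{Ad}(w_n)\circ\theta^1_t\circ\mathrm{Ad}(w_n)^{-1}$ converges, uniformly for $t$ in compacts, to a flow of the form $\mathrm{Ad}(u_t)\circ\theta^2_t$ for a $\theta^2$-cocycle $(u_t)$. At the odd steps one uses the Rohlin property of $\theta^2$ together with the approximate innerness of $\gamma_t$ to choose $v_n$ absorbing the present discrepancy between the flow and $\theta^2$ on a larger time interval, for a larger reference subalgebra, and within a smaller tolerance; at the even steps one uses the Rohlin property of $\theta^1$ to ensure that this correction does not undo the approximations already achieved, so that the telescoping product remains convergent. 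The limit $\sigma$, being a $u$-topology limit of inner automorphisms, lies in $\overline{\mathrm{Int}}(M)$, and $\sigma\circ\theta^1_t\circ\sigma^{-1}=\mathrm{Ad}(u_t)\circ\theta^2_t$ is exactly a strong cocycle conjugacy.

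The main obstacle is the bookkeeping inside the intertwining: the perturbations $v_n$ must be small enough for the infinite product $w_n$ to converge in $\mathrm{Aut}(M)$, yet large enough to genuinely improve the match between $\theta^1$ and a perturbation of $\theta^2$ on an exhausting family of subalgebras and time intervals, and these competing requirements have to be reconciled while the cocycle identities are respected at every stage; it is here that the Rohlin property is used in full, as a supply of approximately $\theta^i$-invariant towers on whole intervals. A secondary difficulty, absent in the discrete-group and compact-group settings, is that all approximations must be kept jointly continuous in the time variable, so one must check throughout that the perturbed objects remain genuine strongly continuous flows.
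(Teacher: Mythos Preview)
The paper does not prove this statement at all: Theorem~\ref{2.3.4} is simply quoted from Masuda--Tomatsu \cite{MT} (their Theorem~5.14) and used as a black box, so there is no ``paper's own proof'' to compare your proposal against.

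What you have written is a reasonable outline of the strategy that Masuda--Tomatsu themselves employ: the forward direction is indeed soft (normality of $\overline{\mathrm{Int}}(M)$), and the converse is obtained by combining (i) an approximate cohomology-vanishing result for Rohlin flows, which lets one replace the $\overline{\mathrm{Int}}(M)$-valued cocycle $\gamma_t=\theta^1_t\circ\theta^2_{-t}$ by an honest unitary $\theta^2$-cocycle up to an approximately inner conjugacy, with (ii) a Bratteli--Elliott--Evans--Kishimoto style intertwining to make the successive approximations converge. Your identification of the two main technical difficulties---balancing the size of the corrections $v_n$ against convergence of the product, and maintaining continuity in the time variable throughout---is accurate.

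That said, as a self-contained proof your sketch has genuine gaps: the ``first technical step'' is asserted rather than carried out, and the passage from Rohlin unitaries in $M_{\omega,\alpha}$ to approximate Rohlin towers with the quantitative control needed for the intertwining (in particular, the $2$-cohomology vanishing that lets one adjust the unitary cocycle at each stage without destroying previous work) is where most of the work in \cite{MT} lies. If your intention was only to indicate the architecture of the Masuda--Tomatsu proof, you have done so correctly; if it was to give an independent proof, substantial details are missing and you should either supply them or, as the present paper does, simply cite \cite{MT}.
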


Here, two flows $\theta ^1$ and $\theta ^2$ are said to be strongly cocycle conjugate if there exists a $\theta ^2$-cocycle $u$ and $\sigma \in \overline{\mathrm{Int}}(M)$ with $\mathrm{Ad} (u_t) \circ \theta ^2_t =\sigma \circ \theta ^1 _t \circ \sigma ^{-1}$. The condition that the automorphism $\sigma $ is approximately inner is just a technical condition. And our main interest is classification up to usual cocycle conjugacy. However, for automorphisms on AFD factors, approximate innerness is characterized by Connes--Takesaki module as in Theorem 1 of Kawahigashi--Sutherland--Takesaki \cite{KwhST}. So this theorem is very powerful for flows on AFD factors.  

\subsection{Amalgamated free products}
\label{2.4}

In order to construct Rohlin flows on non-McDuff factors,  examples of non-McDuff factors are important. One of the important classes of non-McDuff factors consists of amalgamated free product factors. Here, we list up facts about amalgamated free product factors needed for our construction. Basic references are Popa \cite{P0}, Popa \cite{P}, Ueda \cite{U1} and Ueda \cite{U2}.

Let $A$, $B$ be two von Neumann algebras with a common von Neumann subalgebra $D$. Suppose that there exist normal faithful conditional expectations $E_A:A \to D $, $E_B:B\to D$. Then as in 3 of Popa \cite{P0}, Definition 2.4, Theorem 2.5 of Ueda \cite{U1}, there exists a von Neumann algebra $M$ with the following properties.

\textup{(1)} There are normal faithful representations $\pi _A:A\to B(H)$, $\pi _B:B\to B(H)$ such that $\pi _A|_D=\pi _B|_D$.

\textup{(2)} $M=(\pi _A(A)\bigcup \pi _B(B)) ''$.

\textup{(3)} There exists a normal faithful conditional expectation $E:M\rightarrow \pi _A(D)=\pi _B(D)$ such that $E(\pi_A(x))=\pi_A(E_A(x))$, $E(\pi_B(x))=\pi_B(E_B(x))$.

\textup{(4)} The von Neumann algebras $A$ and $B$ are free with amalgamation over $D$, i.e., 
  \[ E(\pi _{C_1}(x_1)\cdots \pi _{C_n}(x_n))=0. \]
  for $n\in \mathbf{N}$, $x_i \in \mathrm{ker} E \circ \pi_{C_i}$, $C_i=A,B$ for $i=1, \cdots n$, $C_i\not =C_{i+1}$ for $i=1,\cdots n-1$.

Hereafter, we identify $\pi _A(A)$ with $A$, $\pi _B(B)$ with $B$, $\pi _A(D)=\pi _B(D) $ with $D$, respectively. Moreover, as in Theorem 2.5 of Ueda \cite{U1}, this $M$ is unique in the following sense: if $(\pi _A^1$, $\pi _B^1$, $E_1$, $M_1)$ and $(\pi _A^2$, $\pi _B^2$, $E_2$, $M_2)$ satisfy the above conditions \textup{(1)--(4)}, then there exists a unique *-homomorphism $\phi :M_1\to M_2$ with $\phi \circ \pi_A^1=\pi _A^2$, $\phi \circ \pi _B^1=\pi _B^2$, $\phi \circ E_1=E_2 \circ \phi$.

This $M$ is said to be the amalgamated free product of $A$ and $B$ relative to $D$ and  denoted by $A*_DB$ (Definition 3.3 of Popa \cite{P0}, Definition 2.4 of Ueda \cite{U1}).

This definition depends on the choice of $E_A$ and $E_B$. However, if $D$ is a common Cartan subalgebra of $A$ and $B$, $E_A$ and $E_B$ are unique. Hence, in this case, $M$ depends only on the choice of $A$, $B$ and $D$.  For our purpose, it is also important to note that if both $A$ and $B$ are of non-type I and if $D$ is a common Cartan subalgebra of $A$ and $B$, then $M$ is a non-McDuff factor. Moreover, we have $M_\omega =M'\cap M^\omega \subset D^\omega$, as shown in Theorem 8 of Ueda \cite{U2}.

\section{Construction of Rohlin flows}
\label{sec3}
Here we construct Rohlin flows on a non-McDuff factor. Let $D=L^{\infty}(X,\mu )$ be a diffuse separable abelian von Neumann algebra, where $\mu$ is a probability measure. Choose a free ergodic $\mu$-preserving action $\alpha:\textbf{Z} \curvearrowright D$. Then $A:= D\rtimes _\alpha \textbf{Z}\supset D$ is a pair of the AFD type $\text{II}_1$ factor and its Cartan subalgebra. There is a unique action $\alpha *\alpha:\textbf{F}_2 \curvearrowright D$ which satisfies $\alpha *\alpha(a)=\alpha$, $\alpha *\alpha(b)=\alpha$, where $a$, $b$ are two generator of $\textbf{F}_2$. Set $M:=A*_DA$. Then $M$ is isomorphic to $D\rtimes_{\alpha *\alpha}\textbf{F}_2$ because the latter satisfies conditions (1)--(4) of subsection \ref{2.4}, and this $M$ is a non-McDuff factor.

\begin{lemm}\textup{(See also Theorem 2.6 of Ueda \cite{U1})}

Let $\theta :\textbf{R}\curvearrowright D$ be a $\mu $-preserving flow commuting with $\alpha$. Let $\{u^{i}_t\}$ be $\theta $-cocycles \textup{(}$i=1,2$\textup{)}. Then the action $\theta$ extends to $M$ by $\theta_t(\lambda _a)=u^1_t \lambda _a$, $\theta_t(\lambda _b)=u^2_t\lambda _b$ for $t\in \textbf{R}$.\label{3.1.1}
\end{lemm}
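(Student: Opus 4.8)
The plan is to obtain the extension in two stages: first extend $\theta$ separately to each of the two copies of $A$ inside $M$ using the cocycles $u^1$ and $u^2$, and then glue the two extensions together by appealing to the uniqueness of the amalgamated free product recalled in subsection~\ref{2.4}. Throughout, write $A=D\rtimes_\alpha\mathbf{Z}$, let $\lambda\in A$ be the canonical unitary implementing $\alpha$ (so $\lambda d\lambda^*=\alpha(d)$ for $d\in D$), and let $E_A\colon A\to D$ be the canonical conditional expectation, $E_A(\sum_n x_n\lambda^n)=x_0$. Realise $M=A*_DA$ as generated by $\pi_A(A)$ and $\pi_B(A)$, the second factor being another copy of $A$, so that $\pi_A(\lambda)=\lambda_a$, $\pi_B(\lambda)=\lambda_b$, and let $E\colon M\to D$ be the associated expectation. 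For a $\theta$-cocycle $u=\{u_t\}\subset U(D)$ (so $u_{s+t}=u_s\theta_s(u_t)$, $t\mapsto u_t$ strongly continuous) I would define $\theta^u$ on $A$ by $\theta^u_t|_D=\theta_t$, $\theta^u_t(\lambda)=u_t\lambda$, extended by $\theta^u_t(x\lambda^n)=\theta_t(x)(u_t\lambda)^n$. This respects the crossed-product relation $\lambda d=\alpha(d)\lambda$, since applying $\theta^u_t$ to the two sides gives $u_t\alpha(\theta_t(d))\lambda$ and $\alpha(\theta_t(d))u_t\lambda$, which agree because $\theta$ commutes with $\alpha$ and $D$ is abelian. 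As $\theta$ preserves $\mu$, the map $\theta^u_t$ preserves the trace $\tau_A=\mu\circ E_A$ and hence extends to a normal $*$-automorphism of $A$; the cocycle identity together with abelianness of $D$ gives $\theta^u_s\circ\theta^u_t=\theta^u_{s+t}$, and strong continuity of $u$ makes $\theta^u$ a flow. Finally, since $(u_t\lambda)^n\in D\lambda^n$, one gets $E_A(\theta^u_t(x\lambda^n))=\delta_{n,0}\theta_t(x)=\theta_t(E_A(x\lambda^n))$, i.e.\ $E_A\circ\theta^u_t=\theta_t\circ E_A$.

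Applying the above with $u=u^1$ and with $u=u^2$ produces flows $\theta^{u^1},\theta^{u^2}$ on $A$; for fixed $t$, consider the quadruple $(\pi_A\circ\theta^{u^1}_t,\ \pi_B\circ\theta^{u^2}_t,\ E,\ M)$. These are normal faithful representations of $A$ agreeing on $D$ (each restricting there to $\pi_A\circ\theta_t=\pi_B\circ\theta_t$), they generate $M$ since the $\theta^{u^i}_t$ are onto, $E$ is a normal faithful conditional expectation onto $\pi_A(\theta_t(D))=\pi_A(D)=\pi_B(D)$, and $E\circ(\pi_A\circ\theta^{u^1}_t)=(\pi_A\circ\theta^{u^1}_t)\circ E_A$, likewise for $\pi_B$, by the previous paragraph. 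Freeness with amalgamation over $D$ transfers from $\pi_A(A),\pi_B(A)$: replacing each generator $x_i$ in a reduced product by $\theta^u_t(x_i)$ (with $u=u^1$ or $u^2$ matching the factor) sends a $\ker(E\circ\pi_{C_i}\circ\theta^u_t)$-element to a $\ker E_A$-element and a reduced product of twisted generators to a reduced product of untwisted ones, which $E$ annihilates. Hence the quadruple satisfies (1)--(4) of subsection~\ref{2.4}, so the uniqueness statement there gives a unique $*$-isomorphism $\theta_t\colon M\to M$ (an automorphism, since domain and codomain coincide) with $\theta_t\circ\pi_A=\pi_A\circ\theta^{u^1}_t$, $\theta_t\circ\pi_B=\pi_B\circ\theta^{u^2}_t$, $\theta_t\circ E=E\circ\theta_t$. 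In particular $\theta_t|_D=\theta_t$, $\theta_t(\lambda_a)=u^1_t\lambda_a$, $\theta_t(\lambda_b)=u^2_t\lambda_b$, as claimed.

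It remains to see that $\{\theta_t\}$ is a flow. Since $\theta_s\circ\theta_t$ satisfies the same three intertwining identities as $\theta_{s+t}$, now with $\theta^{u^i}_s\circ\theta^{u^i}_t=\theta^{u^i}_{s+t}$ in place of $\theta^{u^i}_t$, uniqueness forces $\theta_s\circ\theta_t=\theta_{s+t}$ (and $\theta_0=\mathrm{id}$). For continuity, observe that $\theta_t$ preserves the trace $\tau=\mu\circ E$ on $M$ (using $E\circ\theta_t=\theta_t\circ E$ and $\mu\circ\theta_t=\mu$), hence is $\|\cdot\|_2$-isometric; and $t\mapsto\theta_t(w)$ is $\|\cdot\|_2$-continuous for every word $w$ in $D$, $\lambda_a$, $\lambda_b$, by continuity of $\theta$ on $D$ and strong continuity of $u^1,u^2$. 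A standard $\varepsilon/3$ estimate over the $\|\cdot\|_2$-dense $*$-algebra generated by these elements then yields continuity of $t\mapsto\theta_t(m)$ for all $m\in M$.

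I expect the only genuinely non-mechanical point to be the transfer of the freeness condition in the gluing step: this is exactly where one needs $\theta^{u^i}_t$ to commute with $E_A$, which in turn uses both the hypothesis that $\theta$ commutes with $\alpha$ and the abelianness of $D$. Everything else is bookkeeping with the universal property; one could alternatively verify directly that the stated formulas extend multiplicatively over reduced words and preserve $\tau$ (cf.\ Theorem~2.6 of Ueda~\cite{U1}), but routing through uniqueness keeps the argument short.
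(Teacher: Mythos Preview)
Your proof is correct and follows essentially the same route as the paper: extend $\theta_t$ to each copy of $A$ (using that $\theta$ commutes with $\alpha$), check that the resulting pair of representations satisfies conditions (1)--(4) of subsection~\ref{2.4}, and invoke the uniqueness of the amalgamated free product to obtain $\theta_t$ on $M$. You supply more detail than the paper does---in particular you spell out the intertwining $E_A\circ\theta^{u}_t=\theta_t\circ E_A$, the verification of freeness, the group law via uniqueness, and the $\|\cdot\|_2$-continuity argument that the paper leaves as ``not difficult to see''---but the underlying strategy is identical.
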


\begin{proof}
Fix $t\in \textbf{R}$. Since $\theta _t$ commutes with $\alpha $, the injective homomorphisms $\pi_A:A\cong \{ \{ \lambda_a \} \cup D \} '' \hookrightarrow A*_DA$, $\pi_B:A\cong \{ \{ \lambda_b \} \cup D \} '' \hookrightarrow A*_DA$ satisfying the following are well-defined.
\[ \pi _A(\lambda _a)=u^1_t\lambda _a,\ \pi _B(\lambda _b)=u^2_t\lambda _b,\  \pi _A(x)=\pi _B(x)=\theta _t(x)\ \mathrm{for}\ x\in D. \]
Since this $(\pi _A, \pi _B)$ satisfies conditions (1)--(4) of subsection \ref{2.4}, there exists an automorphism $\theta_t$ of $A*_DA$ such that $\theta _t|_{\{ \{ \lambda _a \}  \cup D\} '' }=\pi _A$, $\theta _t|_{\{ \{ \lambda _a \}  \cup D\} '' }=\pi _B$. 
 It is not difficult to see that the map $t\mapsto \theta _t(y)$ is strongly continuous for $y\in M$.

\end{proof}

For the above flows, we give a characterization of the Rohlin property. 

In order to do this, we make use of the following Rohlin type theorem for $\mathbf{R}\times \mathbf{Z}$ actions on the standard probability space, which is a part of a theorem of Lind \cite{L} or Ornstein--Weiss \cite{OW}.

\begin{lemm}\textup{(Theorem 1 of Lind \cite{L})}
Let $R$ be a $\mu $ -preserving faithful ergodic action of $\textbf{R}\times \textbf{Z}$ on the standard probability space $(X,\mu )$. Then for any $\epsilon >0$, for any $N\in \textbf{N}$ and for any $T>0$, there exists a Borel subset $Y \subset X$ with the following properties.

\textup{(1)} The set $A:=\bigcup _{\mid t\mid \leq T,\mid n\mid \leq N}R_{(t,n)}(Y)$ is Borel measurable and satisfies $\mu (A)>1-\epsilon $.

\textup{(2)} There is a Borel isomorphism $F:A \cong Y\times [-T,T]\times \{ -N,\cdots ,N\}$ and a Borel measure $\nu $ on $Y$ such that 
\[ \mu F^{-1}=\nu \otimes \mathrm{Lebesgue \ measure} \otimes \mathrm{counting \ measure}. \]

\textup{(3)} Under this identification, we have 
  \[ R_{(t,n)}(y,s,m)=(y,s+t,m+n)\] 
for $y\in Y$, $|s+t|\leq T$, $|s|\leq T$, $m\in \{ -N,\cdots,N\}$, $|m|\leq N$, $|n+m|\leq N$.
\label{3.1.2}
\end{lemm}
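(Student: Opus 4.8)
The plan is to read Lemma~\ref{3.1.2} as the Rohlin (tower) lemma for the locally compact abelian group $G:=\mathbf{R}\times\mathbf{Z}$ acting on $(X,\mu)$, with ``tile'' the window $K:=[-T,T]\times\{-N,\dots,N\}\subset G$, and to isolate the only substantive point, namely: $(\star)$ there is a Borel set $Y\subset X$ such that the map $\Phi\colon K\times Y\to X$, $\Phi((t,n),y):=R_{(t,n)}(y)$, is injective and $\mu(\Phi(K\times Y))>1-\epsilon$.

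Granting $(\star)$, the three assertions are formal. Since $\Phi$ is an injective Borel map between standard Borel spaces, by the Lusin--Souslin theorem its image $A$ is Borel (this, together with the estimate $\mu(A)>1-\epsilon$ from $(\star)$, is (1)) and $\Phi$ is a Borel isomorphism onto $A$; set $F:=\Phi^{-1}$, the Borel isomorphism required in (2). For the measure, define on Borel $S\subset Y$
\[
 \nu(S):=\frac{1}{2T(2N+1)}\,\mu\Big(\bigcup_{(t,n)\in K}R_{(t,n)}(S)\Big);
\]
this is a finite Borel measure because injectivity of $\Phi$ makes $S\mapsto\bigcup_{(t,n)\in K}R_{(t,n)}(S)$ carry disjoint sets to disjoint sets. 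Evaluating $\mu F^{-1}$ on boxes $S\times I\times\{j\}$: by $R$-invariance of $\mu$ this equals $\mu\big(R_{I\times\{0\}}(S)\big)$, and $I\mapsto\mu\big(R_{I\times\{0\}}(S)\big)$ is a finite Borel measure on $[-T,T]$ invariant under all sufficiently small translations (apply $R_{(h,0)}$, using again that $\Phi$ is injective), hence a multiple of Lebesgue measure; tracking the constant gives $\mu F^{-1}(S\times I\times\{j\})=\nu(S)\,|I|$, which is (2). Finally (3) is immediate from the group law: $R_{(t,n)}F^{-1}(y,s,m)=R_{(s+t,m+n)}(y)=F^{-1}(y,s+t,m+n)$ whenever $(s+t,m+n)\in K$.

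It remains to prove $(\star)$, and here I would argue in two stages, using that $R$ is essentially free (the force of ``faithful'' for such actions; one may in any case pass to the free part). \emph{Stage 1 (the $\mathbf{R}$-direction).} The subflow $\phi_t:=R_{(t,0)}$ is an essentially free measure-preserving flow, so by Ambrose--Kakutani, sharpened (Rudolph) to control return times, there is a Borel cross-section $Z\subset X$ whose first-return-time function takes values in $[2T,2T(1+\delta))$, with $\delta=\delta(\epsilon)$ small. Then $\phi_t(Z)\cap Z=\varnothing$ for $0<|t|<2T$, and the slab $\bigcup_{|t|\le T}\phi_t(Z)$ has $\mu$-measure $>1-\delta$. \emph{Stage 2 (the $\mathbf{Z}$-direction).} Since $R_{(0,1)}$ commutes with $\phi$, it descends to a measure-preserving, essentially free $\mathbf{Z}$-action $\bar\psi$ on the transversal $(Z,\nu_0)$, where $\nu_0$ is the (finite) transverse measure; applying the classical Rohlin lemma to $\bar\psi$ yields $Y\subset Z$ with $\bar\psi^{\,j}(Y)$, $-N\le j\le N$, pairwise disjoint and of total relative measure $>1-\delta$ in $Z$. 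One then checks that this $Y$ witnesses $(\star)$: a coincidence $R_{(t,n)}(y)=R_{(t',n')}(y')$ with $(t,n),(t',n')\in K$ projects, along $\phi$-orbits, to a coincidence between levels of the $\bar\psi$-tower, forcing $n=n'$ by disjointness of that tower and then $t=t'$, $y=y'$ by the return-time bound; and $\mu(\Phi(K\times Y))=2T(2N+1)\,\nu_0(Y)>1-\epsilon$ once $\delta$ is small enough, on combining the two $(1-\delta)$-estimates.

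The step I expect to be the real obstacle is Stage~2: giving a clean measurable description of the induced transversal action $\bar\psi$ together with its return-time cocycle, verifying essential freeness of $\bar\psi$ from faithfulness of $R$, and correctly tracking the $[-T,T]$-coordinate when the $\mathbf{Z}$-Rohlin tower on $Z$ is lifted back to a tower on $X$. These are precisely the delicate points handled in Lind \cite{L} (and, from the amenable-group perspective, subsumed by the Ornstein--Weiss quasi-tiling theorem \cite{OW}), so in the paper I would present the reduction above in full and invoke \cite{L} (or \cite{OW}) for the existence of the base $Y$ asserted in $(\star)$.
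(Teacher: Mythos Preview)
The paper does not prove Lemma~\ref{3.1.2}; it is simply quoted as Theorem~1 of Lind \cite{L} (with Ornstein--Weiss \cite{OW} mentioned as an alternative source), and then used as a black box in the proof of Theorem~\ref{3.1.3}. Your proposal, after the formal reductions, likewise explicitly invokes \cite{L} or \cite{OW} for the existence of the base $Y$ witnessing $(\star)$, so in substance both the paper and your proposal amount to citing the literature, which is entirely appropriate for a result stated as a quotation.

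Your added value is the unpacking: the derivation of (1)--(3) from the single injectivity statement $(\star)$ is correct and is a clean way to present how the three conclusions follow formally once the Rohlin base exists. The two-stage heuristic you give for $(\star)$ (Ambrose--Kakutani/Rudolph for the $\mathbf{R}$-direction, then classical Rohlin for the $\mathbf{Z}$-direction) is a reasonable outline of one route to such tower lemmas, but note that Stage~2 as written is not quite right: $R_{(0,1)}$ commutes with $\phi$ but does \emph{not} map the $\phi$-cross-section $Z$ to itself, so it does not literally ``descend'' to an action on $(Z,\nu_0)$. What one actually gets is a cocycle-twisted return map, and verifying that the $\mathbf{Z}$-Rohlin tower on $Z$ lifts to a genuinely injective $K$-picture on $X$ requires controlling that cocycle---which is precisely the content you (correctly) identify as the obstacle and hand off to \cite{L}. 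So your self-assessment is accurate: the formal shell is yours, the hard step is Lind's, and this matches how the paper treats the lemma.
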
 

Now, we give the characterization of the Rohlin property for flows constructed in Lemma \ref{3.1.1}.

\begin{theo}
\label{17}

For flows constructed in Lemma \ref{3.1.1}, consider the following five conditions.

\textup{(1)} The flow $\theta $ has the Rohlin property.

\textup{(2)} The action $\{(\theta_t|_D) \circ \alpha _n \}_{(t,n)\in \textbf{R}\times \textbf{Z}}$ is faithful on $D$.

\textup{(3)} The flow $\theta $ is centrally free. That is, $\theta _t$ is free on $M_\omega $ for $t\not =0$.

\textup{(4)} The flow $\theta $ is centrally free and has full Connes spectrum.

\textup{(5)} We have $(M\rtimes _{\theta} \textbf{R})\cap M'=\textbf{C}$.

Then we have implications \textup{(1)} $\Leftrightarrow$ \textup{(2)} $\Leftrightarrow $ \textup{(3)} $\Leftrightarrow $ \textup{(4)} $\Rightarrow $ \textup{(5)}.
\label{3.1.3}
\end{theo}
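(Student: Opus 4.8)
The plan is to prove the cycle $(2)\Rightarrow(1)\Rightarrow(4)\Rightarrow(3)\Rightarrow(2)$, which yields the equivalence of $(1)$--$(4)$, and then $(4)\Rightarrow(5)$. Here $(1)\Rightarrow(4)$ (and hence $(1)\Rightarrow(3)$) is the general fact, recalled after Definition~\ref{2.3.1}, that a Rohlin flow is automatically centrally free and has full Connes spectrum (Masuda--Tomatsu~\cite{MT}), and $(4)\Rightarrow(3)$ is trivial, so the content lies in $(2)\Rightarrow(1)$, $(3)\Rightarrow(2)$ and $(4)\Rightarrow(5)$. Throughout I would take the reference state $\varphi$ to be the trace $\tau$ of the $\mathrm{II}_1$ factor $M$, so that $\|x\|_\varphi^\sharp=\sqrt2\,\|x\|_2$ (recall $M_{\omega,\theta}$ is independent of this choice), and I would use from subsection~\ref{2.4} that $M_\omega=M'\cap M^\omega\subseteq D^\omega$; expanding commutation with $\lambda_a$ and $\lambda_b$ (both of which act on $D$ by $\alpha$) identifies $M_\omega$ with $\{(d_n)\in D^\omega:\lim_{n\to\omega}\|\alpha(d_n)-d_n\|_2=0\}$, which is nontrivial by Rohlin's lemma for the aperiodic action $\alpha$.

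For $(2)\Rightarrow(1)$ I would proceed as follows. Fix $p\in\mathbf{R}$. By $(2)$ the $\mathbf{R}\times\mathbf{Z}$-action $R_{(t,n)}:=(\theta_t|_D)\circ\alpha_n$ on $(X,\mu)$ is faithful, and it is ergodic because it contains $\alpha$; hence Lemma~\ref{3.1.2} applies. Choosing $T_n,N_n\to\infty$ and applying it with $\epsilon=1/n$ yields $Y_n$, $A_n=\bigcup_{|t|\le T_n,\,|m|\le N_n}R_{(t,m)}(Y_n)$ with $\mu(A_n)>1-1/n$, and Borel isomorphisms $F_n\colon A_n\cong Y_n\times[-T_n,T_n]\times\{-N_n,\dots,N_n\}$ transporting $R$ to translation. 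I would then set $u_n\in U(D)$ equal to $e^{ips}$ at the point with tower coordinates $(y,s,m)$ (so $u_n$ depends on neither $y$ nor the $\mathbf{Z}$-coordinate $m$) and equal to $1$ off $A_n$. Since $D$ is abelian, $[u_n,d]=0$ for $d\in D$, while $[u_n,\lambda_a]=(u_n-\alpha(u_n))\lambda_a$ and $u_n,\alpha(u_n)$ differ only where the $m$-translation leaves the tower, a set of measure $O(1/n+1/N_n)\to0$; likewise for $\lambda_b$, so a standard approximation argument gives $(u_n)\in C_\omega$ and $u:=\pi((u_n))$ is a unitary of $M_\omega$. For the eigenvalue relation, for fixed $t$ and large $n$ the flow $\theta_t=R_{(t,0)}$ acts on the tower by $s\mapsto s+t$, so $\theta_t(u_n)(y,s,m)=u_n(y,s-t,m)=e^{-ipt}u_n(y,s,m)$ up to an error supported on a set of measure $\le 2/n+|t|/(2T_n)$ (from $X\setminus A_n$ and the strip of width $|t|$ near the boundary of $[-T_n,T_n]$); hence $\|\theta_t(u_n)-e^{-ipt}u_n\|_2\to0$, so $\theta_t^\omega(u)=e^{-ipt}u$ in $M^\omega$ for each $t$. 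Running the same estimate with $|t|<\delta$ and using $\|\theta_t(u_n)-u_n\|_\varphi^\sharp\le\|\theta_t(u_n)-e^{-ipt}u_n\|_\varphi^\sharp+|e^{-ipt}-1|\sqrt2$ shows that for each $\epsilon>0$ a small enough $\delta$ works on a cofinite, hence $\omega$-large, set of $n$; so $u\in U(M_{\omega,\theta})$ and $\theta$ has the Rohlin property.

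For $(3)\Rightarrow(2)$ I would argue contrapositively: if $(2)$ fails there is $(t_0,n_0)\neq(0,0)$ with $(\theta_{t_0}|_D)\circ\alpha_{n_0}=\mathrm{id}_D$, and since $\alpha$ is aperiodic this forces $t_0\neq0$ and $\theta_{t_0}|_D=\alpha_{-n_0}$. For $(d_n)\in M_\omega\subseteq D^\omega$ one has $\|\alpha(d_n)-d_n\|_2\to_\omega 0$, hence $\|\alpha_{-n_0}(d_n)-d_n\|_2\to_\omega 0$ by telescoping, so $\theta_{t_0}^\omega((d_n))=(\alpha_{-n_0}(d_n))=(d_n)$ in $M^\omega$; i.e.\ $\theta_{t_0}$ is the identity on $M_\omega$. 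Since $M_\omega\neq\mathbf{C}$, $\theta_{t_0}$ is not free on $M_\omega$, so $\theta$ is not centrally free. For $(4)\Rightarrow(5)$: by the equivalences just proved, $(4)$ implies $(2)$, so $R$ is faithful, hence (being abelian, ergodic and faithful) essentially free. If $\theta_r=\mathrm{Ad}(v)$ with $v=\sum_{g\in\mathbf{F}_2}v_g\lambda_g\in U(M)$ and $r\neq0$, then, using that $g$ acts on $D$ by $\alpha_{e(g)}$ with $e(g)$ the exponent sum, the relation $vx=\theta_r(x)v$ for $x\in D$ gives $v_g\,\alpha_{e(g)}(x)=\theta_r(x)\,v_g$ for all $g,x$; thus on $\mathrm{supp}(v_g)$ the transformations $\alpha_{e(g)}$ and $\theta_r|_D$ agree, i.e.\ $R_{(-r,e(g))}$ fixes $\mathrm{supp}(v_g)$ pointwise, forcing $v_g=0$ by essential freeness since $r\neq0$; so $v=0$, a contradiction, and $\theta_r$ is properly outer for $r\neq0$ ($M$ being a factor). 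Finally $(M\rtimes_\theta\mathbf{R})\cap M'$ is $\hat\theta$-invariant with fixed-point algebra $(M\rtimes_\theta\mathbf{R})^{\hat\theta}\cap M'=M\cap M'=\mathbf{C}$, and since for $r\neq0$ there is no nonzero $w\in M$ with $wz=\theta_r(z)w$ for all $z$ (proper outerness), the standard analysis of the $\hat\theta$-spectral subspaces of the crossed product shows every nonzero-frequency component of an element of $(M\rtimes_\theta\mathbf{R})\cap M'$ vanishes, giving $(M\rtimes_\theta\mathbf{R})\cap M'=\mathbf{C}$, i.e.\ $(5)$.

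I expect the main obstacle to be the construction in $(2)\Rightarrow(1)$: one must manufacture from the measure-theoretic Rohlin towers of Lemma~\ref{3.1.2} a single sequence $(u_n)$ that is at once a central sequence for $M$, $\theta$-equicontinuous (so that $u\in M_{\omega,\theta}$), and an exact eigenvector $\theta_t(u)=e^{-ipt}u$ in $M^\omega$ simultaneously for all $t$, which requires controlling, uniformly, three distinct boundary errors: the complement of the tower, the $\mathbf{R}$-boundary (which governs the eigenvalue relation), and the $\mathbf{Z}$-boundary (which governs central triviality). By contrast the proper-outerness step in $(4)\Rightarrow(5)$ is routine once essential freeness is in hand, and the concluding relative-commutant statement is a standard fact about crossed products by $\mathbf{R}$.
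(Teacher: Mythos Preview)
Your argument for the equivalence of (1)--(4) is essentially the same as the paper's: you prove $(3)\Rightarrow(2)$ by the contrapositive using $M_\omega\subset D^\omega$, and $(2)\Rightarrow(1)$ by building approximate eigenunitaries in $D$ out of the Rohlin towers of Lemma~\ref{3.1.2}, defining $u_n$ to be $e^{ips}$ on the tower and $1$ off it. The paper makes the same construction (with the specific choices $T=8n^3$, $N=8n^2$, $\epsilon=1/8n^2$) and records the two estimates $\mu(|\theta_t(u_n)-e^{-ipt}u_n|^2)<n^{-2}$ for $|t|<n$ and $\mu(|\alpha(u_n)-u_n|^2)<n^{-2}$, which are exactly your $\mathbf R$-boundary and $\mathbf Z$-boundary errors; your additional verification that $u\in M_{\omega,\theta}$ via the $|e^{-ipt}-1|$ estimate is a welcome detail the paper leaves implicit.

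The one genuine difference is how (5) is obtained. The paper does not prove $(4)\Rightarrow(5)$ at all: it simply invokes Masuda--Tomatsu \cite{MT} for the implication $(1)\Rightarrow(5)$ (Rohlin flows always satisfy $M'\cap(M\rtimes_\theta\mathbf R)=\mathbf C$), which together with $(4)\Leftrightarrow(1)$ gives what is needed. You instead argue directly that $(4)\Rightarrow(2)$ yields pointwise outerness of $\theta_r$ on $M$ via the Fourier expansion in $D\rtimes\mathbf F_2$, and then deduce $M'\cap(M\rtimes_\theta\mathbf R)=\mathbf C$. Your outerness step is correct (and note that your justification ``abelian, ergodic, faithful $\Rightarrow$ essentially free'' is valid in the only sense you use it---each nontrivial $(r,m)$ has null fixed-point set---because the fixed-point set is invariant under the ergodic action); the final passage from pointwise outerness to triviality of $M'\cap(M\rtimes_\theta\mathbf R)$ is indeed standard but does require the Arveson spectral analysis of $\hat\theta$ on the relative commutant (the spectral subspace at $t$ meets $M'$ only in $\{a\in M:a\,m=\theta_t(m)\,a\}\lambda_t$), which is worth a precise reference since the ``Fourier coefficients'' for continuous crossed products are more delicate than in the discrete case. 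The paper's route via \cite{MT} avoids this entirely.
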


\begin{proof}
The implications (1) $\Rightarrow$ (4) $\Rightarrow$ (3) and (1) $\Rightarrow$ (5) follow from Masuda--Tomatsu \cite{MT}. Hence it suffices to show the implications (3) $\Rightarrow $(2) and (2) $\Rightarrow$ (1). 

First, we show the implication (3) $\Rightarrow$ (2). Assume that condition (2) does not hold. Then  there exists $(t,n)\not =0$ such that $\theta_t=\alpha _n$. We have $t\not =0 $ because $\alpha $ is ergodic. Hence for $x\in M_\omega \subset D^{\omega }$ (as explained in subsection \ref{2.4}, the implication $M_\omega \subset D^\omega $ is shown in Theorem 8 of Ueda \cite{U2}), we have $\theta _t(x)=\lambda _{a^n}x\lambda _{a^{-n}}=x$, which implies that condition (3) does not hold.

Next, we show the implication (2) $\Rightarrow$ (1). Suppose that the action $\{(\theta_t|_D) \circ  \alpha _n \}_{(t,n)\in \textbf{R}\times \textbf{Z}}$ is  faithful. Fix $n\in \textbf{N}$. It is enough to construct a sequence $\{ u_n \}$ of unitary elements of $D$ such that 

(i) $\mu \left (|\theta _t(u_n)-e^{-ipt}u_n|^2\right )<n^{-2}$ for $|t|<n$,

(ii) $\mu \left (|\alpha(u_n)-u_n|^2\right )<n^{-2}$.

Assume that we have these $u_n$'s. Then by condition (ii), $\{ u_n\}$ asymptotically commutes with $\lambda_a$ and $\lambda _b$. Hence $\{ u_n \}$ is a centralizing sequence. By using condition (i), we have $\theta_t\left (\{u_n \}\right )=e^{-ipt}\{u_n \}$ for $t\in\textbf{R}$.                                                            

Now, we show the existence of the above $\{u_n\}$. Regard $D$ as $L^\infty (X, \mu )$, where $(X, \mu)$ is a standard probability measured space and let $S:\textbf{R}\curvearrowright (X,\mu )$ and $T:\textbf{Z}\curvearrowright (X,\mu )$ be actions induced by $\theta $, $\alpha $, respectively.
By using Lemma \ref{3.1.2} for $T:=8n^3$, $N:=8n^2$, $\epsilon :=1/8n^2$,  $R_{(s,m)}:=S_sT_m$, there exists a Borel subset $Y\subset X$ satisfying the conditions in Lemma \ref{3.1.2}.

Set
\begin{equation*}
     u_n(y,s,m):= \begin{cases}
                    e^{ips} & \text{for $(y,s,m)\in A$}, \\
                    1       & \text{for $ x\in X \backslash A$}.
                   \end{cases} 
\end{equation*}
Then by condition (3) of Lemma \ref{3.1.2}, we have 
\[ (\theta_t(u_n)-e^{-ipt}u_n)(x)=0 \text{ for $ x\in \{ (y,s,m)\in A |\ |s|\leq T-n \}$}. \]
Hence we have 
\begin{equation*}
  \begin{split}
    \mu (|\theta _t(u_n)-e^{-ipt}u_n|^2) &\leq 4\mu (X\backslash \{ (y,s,m)\in A|\ |s|\leq T-n\}) \\
                                         &=4(\mu (X\backslash A)+\mu(\{(y,s,m)\in A|\ |s|>T-n\})) \\
                                         &\leq 4(\epsilon +n/T)=n^{-2}. 
  \end{split}
\end{equation*}
By similar computation to this, we have $\mu (|\alpha(u_n)-u_n|^2)<n^{-2}$.
\end{proof}
By this theorem, it is possible to see that there exist Rohlin flows on the factor $M$. In order to do this, it is important to note that if an action $\beta :\mathbf{Z}\curvearrowright D$ is free ergodic probability measure preserving, then $D\rtimes _{\beta *\beta } \mathbf{F}_2$ is isomorphic to $M$ considered in this section, which is shown by Connes--Feldman--Weiss \cite{CFW} and the uniqueness of the amalgamated free product. 
\begin{exam}
Let $(\tilde{D}, \mu)$ be a diffuse separable abelian von Neumann algebra with a normal faithful trace and let $\tilde{\theta }$ be a $\mu$-preserving faithful flow on $D$. Set $D:=\otimes _{n=-\infty}^{\infty}(\tilde{D},\mu )^n$ and $\alpha:\mathbf{Z}\curvearrowright D$ be a Bernoulli shift. Then the diagonal action $\theta : \mathbf{R} \curvearrowright D$ of $\tilde{\theta}$ extends to $M:=D\rtimes _\alpha \mathbf{F}_2$ and has the Rohlin property.
\end{exam}
Other examples are given in the following.
\begin{exam}
Let $D=L^{\infty }(\textbf{T}^2)$($=L^\infty ((\mathbf{R}/\mathbf{Z})^2)$) and let $\alpha :\textbf{Z}\curvearrowright D$ be an action defined by $\alpha (f)(r,s)=f(r-1/\sqrt{2}, s-1/\sqrt{3})$
 for $(r,s)\in \textbf{T}^2$, $f\in D$. 
Then $D\rtimes _{\alpha*\alpha}\textbf{F}_2$ is isomorphic to $M$. By Lemma \ref{3.1.1}, we can define a flow $\theta^{\lambda, \mu, p, q}:\textbf{R}\curvearrowright D\rtimes _{\alpha *\alpha }\textbf{F} _2$ by 
\[ \theta^{\lambda , \mu , p, q}_t(f)(r,s)=f(r-pt,s-qt) \]
 for $(r,s)\in \textbf{T}^2$, $f\in D$, $t\in \textbf{R}$,
\[ \theta^{\lambda , \mu , p, q}_t(\lambda _a)=e^{i\lambda t}\lambda _a,\  \theta^{\lambda , \mu , p, q}_t(\lambda _b)=e^{i\mu t}\lambda _b\]
 for $t\in \textbf{R}$.
This $\theta^{\lambda , \mu , p, q}$ has the Rohlin property if and only if $(p,q)\not =r(n/\sqrt{2}-m, n/\sqrt{3}-l)$ for any  $r\in \textbf{R}$, $n,m,l\in \textbf{Z}$. 
\label{3.1.4}
\end{exam}
\begin{proof}
In order to show this, by Theorem \ref{17}, it is enough to show that the action $\{ (\theta ^{\lambda, \mu, p,q}_t|_D)\circ \alpha_n\} $ is faithful if and only if the above condition holds.  For $(t,n)\in \textbf{R} \times \textbf{Z}$, $\theta ^{\lambda, \mu, p,q}_t|_D=\alpha_n$ if and only if $pt=n/\sqrt{2}+m$, $qt= n/\sqrt{3}+l$ for some $m,l\in \textbf{Z}$. Hence $\{( \theta ^{\lambda, \mu, p,q}_t|_D)\circ \alpha_n\} $ is faithful if and only if $(p,q)\not=(n/\sqrt{2}+m, n/\sqrt{3}+l)/t$ for all $t\in \textbf{R}\setminus \{0\}, n,m,l\in \textbf{Z}$. 
\end{proof}
If we further assume that $(p,q)\not =r(s/\sqrt{2}-m, s/\sqrt{3}-l)$ for any  $r,s \in \textbf{R}$, $m,l\in \textbf{Z}$, then this also gives a new example of a Rohlin flow on the $C^*$-algebra $C(\mathbf{T}^2)\rtimes _{\alpha *\alpha}\mathbf{F}_2$, which is shown by the same argument as in Proposition 2.5 of Kishimoto \cite{Ksm}.

\begin{rema}
Let $\alpha :G\curvearrowright D$ be a non-singular free ergodic action of a discrete group. If the action $\alpha $ is stable (See Definition 3.1 of Jones--Schmidt \cite{JS}), then the factor $M:=D\rtimes _{\alpha *\alpha } (G*G)$ admits Rohlin flows. This is shown by the argument similar to (2) $\Rightarrow$ (1) of Theorem \ref{3.1.3}. In particular, by Corollary 5.8 of Ueda \cite{U1}, for any $\lambda \in [0,1]$, there exists a type $\mathrm{III}_\lambda $ non-McDuff factor which admits Rohlin flows. 
\end{rema}

\section{Classification of Rohlin flows}
\label{sec4}

In section 3, we have shown that there are Rohlin flows on non-McDuff factors. So it is natural to try to classify them. In this section, first, we apply Masuda--Tomatsu's theorem for our Rohlin flows, and we obtain a classification up to strong cocycle conjugacy. However, strong cocycle conjugacy is nothing but a technical classification, and a main purpose is a classification up to usual cocycle conjugacy. So it is important to know whether these two classifications are different or not, and we show that they are strictly different for our flows.

\bigskip

The following lemma is useful for investigating whether the difference of two flows is approximately inner or not. Let $M=A*_DB$ be an amalgamated free product over a common Cartan subalgebra $D$, as in subsection \ref{2.4}. Let $\mu $ be a normal faithful state on $D$ and let $E_A:A\to D$, $E_B:B\to D$, $E:M\to D$ be the conditional expectations as in subsection \ref{2.4}. 

\begin{lemm}\textup{(Lemma 2.1 of Popa \cite{P}, Theorem 5 of Ueda \cite{U2})}

Let $M=A*_DB$, $\mu $, $E_A$, $E_B$, $E$ be as above. Let $x\in M^\omega $ and let $v$, $w$ be unitaries of $A$ with $\mu \circ E_A (u^* \cdot u)=\mu \circ E_A =\mu \circ E_A (v^* \cdot v)$. Assume that
$E_A(v^n)=0$, $E_A(w^n)=0$\ \textup{(}$n\not=0$\textup{)}, $vDv^*=D=wDw^*$, $x=vxw^*$. Then for $y_1,y_2\in \ker E_B$, we have 
\[ \parallel y_1x-xy_2 \parallel_{(\mu \circ E)^\omega }^2  \geq \parallel y_1(x-E^\omega (x)) \parallel_{(\mu \circ E)^\omega }^2 +\parallel (x-E^{\omega }(x))y_2\parallel _{(\mu \circ E)^\omega }^2 ,\]
where $(\mu \circ E)^{\omega }:M^{\omega }\to \mathbf{C}$, $E^\omega :M^\omega \to D^\omega $ are maps induced by $\mu \circ E$ and $E$, respectively \textup{(see subsection 2.2 of Ueda \cite{U2})} and $\parallel x\parallel _{(\mu \circ E)^\omega}=((\mu \circ E)^\omega (x^*x))^{1/2}$ for $x\in M^\omega $. 
\label{4.1.1}
\end{lemm}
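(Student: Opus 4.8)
The plan is to carry the whole estimate into the Hilbert space $\mathcal H:=L^2(M^\omega,(\mu\circ E)^\omega)$ (with norm $\|\cdot\|=\|\cdot\|_{(\mu\circ E)^\omega}$) and to read it off from a Pythagorean identity, following — in an ultraproduct refinement — the pattern of Popa's proof of Lemma~2.1 of \cite{P}. Write $x=E^\omega(x)+\xi$ with $\xi:=x-E^\omega(x)\in\ker E^\omega$, so that
\[
 y_1x-xy_2=\bigl(y_1E^\omega(x)-E^\omega(x)y_2\bigr)+y_1\xi-\xi y_2 .
\]
If the three vectors $y_1E^\omega(x)-E^\omega(x)y_2$, $y_1\xi$ and $\xi y_2$ are pairwise orthogonal in $\mathcal H$, then
\[
 \|y_1x-xy_2\|^2=\|y_1E^\omega(x)-E^\omega(x)y_2\|^2+\|y_1\xi\|^2+\|\xi y_2\|^2\ \ge\ \|y_1\xi\|^2+\|\xi y_2\|^2 ,
\]
which is the claim, since $y_1\xi=y_1(x-E^\omega(x))$ and $\xi y_2=(x-E^\omega(x))y_2$. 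So everything reduces to these three orthogonalities.

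To obtain them I would use the $L^2$-orthogonal reduced-word decomposition of $\mathcal H$ over the core $L^2(D^\omega,\mu^\omega)$ into the subspaces $\mathcal H_{(c_1,\dots,c_k)}$ indexed by alternating words in $\{A,B\}$; for the ultraproduct $M^\omega$ this is exactly the framework Ueda sets up in Section~2.2 of \cite{U2}. The crucial structural fact — and the step I expect to be the main obstacle — is that \emph{$\xi$ is supported on the core together with those reduced words whose first and last letters lie in $A$} (i.e. in $\ker E_A$). Granting this, the three orthogonalities are combinatorics: since $y_1\in\ker E_B$, left multiplication by $y_1$ turns an $A$-leading word into a $B$-leading word with no collapse, so $y_1\xi$ is supported on $(B,A,\dots,B,A)$-words of even length $\ge 2$; likewise $\xi y_2$ is supported on $(A,B,\dots,A,B)$-words of even length $\ge 2$; and $y_1E^\omega(x)-E^\omega(x)y_2$ lies in $\mathcal H_{(B)}$, because $E^\omega$ is $D^\omega$-bimodular and $y_i\in\ker E_B\subseteq\ker E$, so $y_iE^\omega(x)$ and $E^\omega(x)y_i$ are single $B$-letters over $D^\omega$. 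These three supports are pairwise disjoint among the reduced words (distinct first letter, or distinct length), so the vectors are orthogonal.

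It remains to prove the support statement, and this is where the hypotheses on $v$ and $w$ enter. Iterate $x=v^nx(w^*)^n$ for all $n\in\mathbf Z$, and let $P$ be the projection of $\mathcal H$ onto the sum of the subspaces with first letter in $B$. Since $P$ commutes with right multiplication, $P(x)=\bigl(P(v^nx)\bigr)(w^*)^n$, hence $\|P(x)\|\le\|P(v^nx)\|$. Because $E_A(v^m)=0$ for $m\ne 0$ one has $v^m\in\ker E_A$, so $v^n$ sends a $B$-leading word to an $A$-leading one and a core element to an $A$-leading one; the only way $v^n$ produces a $B$-leading word is through the collapse $v^na_1\mapsto E_A^\omega(v^na_1)$ applied to the leading $A$-letter of an $A$-leading word of length $\ge 2$. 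A Bessel inequality in the Hilbert $D^\omega$-module $L^2(A^\omega,(\mu\circ E_A)^\omega)$ for the orthonormal family $\{v^{-n}\}_{n\in\mathbf Z}$ (orthonormal precisely because $E_A(v^m)=0$, $m\ne0$) gives $\sum_n\|E_A^\omega(v^na_1)\|^2<\infty$; combined with the contraction bound $\|d\,\eta\|\le\|\eta\|_\infty\|d\|$ for $d\in D^\omega$, $\eta\in M^\omega$, this yields $\|P(v^nx)\|\to 0$ as $|n|\to\infty$, whence $P(x)=0$. The parallel argument with the projection onto $B$-\emph{trailing} words, applying right multiplication by $(w^*)^n$, $E_A(w^m)=0$, $wDw^*=D$, and using that $\mu$ is a state on the abelian algebra $D$ (so $\mu(d^*d)=\mu(dd^*)$), kills the $B$-trailing part of $x$ as well, which is exactly the support statement.

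The points needing care throughout are that all of this — the reduced-word decomposition of $M^\omega$, the projections $P$ and the collapses, and the module-norm estimates — must be legitimate inside the ultraproduct $M^\omega$ and for the generally non-tracial state $(\mu\circ E)^\omega$; the normalization conditions $\mu\circ E_A(v^*\cdot v)=\mu\circ E_A=\mu\circ E_A(w^*\cdot w)$ together with $vDv^*=D=wDw^*$ and Ueda's analysis in Section~2.2 of \cite{U2} are precisely what makes this work, and this is where I would expect most of the technical effort to go.
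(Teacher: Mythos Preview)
The paper does not give its own proof of this lemma: it is simply quoted with attribution to Popa \cite{P} (Lemma~2.1) and Ueda \cite{U2} (Theorem~5), and then used as a black box in the proof of Lemma~\ref{aaaa}. So there is nothing in the paper to compare your argument against directly.

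That said, your sketch is exactly the Popa--Ueda argument, carried to the ultraproduct as in \cite{U2}, and the overall strategy is sound: decompose $x=E^\omega(x)+\xi$, show $\xi$ has only $A$-leading and $A$-trailing reduced-word components via the averaging with $v^n,(w^*)^n$ and a Bessel-type estimate, and then read off the three pairwise orthogonalities in $L^2$. One small slip: you write that ``$\xi$ is supported on the core together with'' the $A$--$A$ words, but $\xi=x-E^\omega(x)$ has no core component by construction; what you mean (and what you actually use) is that $x$ is supported on the core plus $A$--$A$ words, hence $\xi$ on the $A$--$A$ words alone. With that corrected, your description of the supports of $y_1\xi$, $\xi y_2$ and $y_1E^\omega(x)-E^\omega(x)y_2$ and their mutual orthogonality is accurate. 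You are also right that the genuinely delicate part is making the collapse/Bessel estimate and the isometry of right multiplication by $(w^*)^n$ rigorous for the non-tracial state $(\mu\circ E)^\omega$ on $M^\omega$; this is precisely where the normalizing hypotheses $vDv^*=D=wDw^*$ and $\mu\circ E_A(v^*\cdot v)=\mu\circ E_A=\mu\circ E_A(w^*\cdot w)$ are used, and where one leans on Ueda's Section~2.2.
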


By using this lemma, it is possible to show the following lemma, which is crucially important to investigate the approximate innerness of flows. Let $M=A*_DA$ be the type $\mathrm{II}_1$ amalgamated free product factor considered in section \ref{sec3}.

\begin{lemm}
Let $\theta$ be an automorphism of $M=A*_DA$ which globally preserves $D$ and satisfies $\theta (\lambda_a)=u^1\lambda_a$, $\theta (\lambda_b)=u^2\lambda_b$ for some $u^1$, $u^2$ $\in U(D)$. Then the automorphism $\theta$ is approximately inner if and only if $\theta |_D=\mathrm{id}$, $u^1=u^2$.
\label{aaaa}
\end{lemm}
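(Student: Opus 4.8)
The plan is to prove both directions separately, with the ``only if'' direction being the substantial one. For the easy ``if'' direction: if $\theta|_D = \mathrm{id}$ and $u^1 = u^2 =: u$, then I first note that $\theta = \mathrm{Ad}(u)$ would be the naive guess, but $u \in U(D)$ need not conjugate $M$ correctly; instead, observe that $\theta$ coincides with the automorphism sending $\lambda_a \mapsto u\lambda_a$, $\lambda_b \mapsto u\lambda_b$ and fixing $D$. Since $\alpha$ is a free ergodic p.m.p.\ action of $\mathbf{Z}$ on the diffuse abelian $D$, the action is approximately transitive / Rohlin, so one can approximate $u$ by coboundaries: choose unitaries $v_k \in U(D)$ with $v_k \alpha(v_k)^* \to u$ in $\|\cdot\|_\mu$. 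Then $\mathrm{Ad}(v_k)$ fixes $D$ pointwise? No — $\mathrm{Ad}(v_k)$ on $D$ is trivial since $D$ is abelian, and $\mathrm{Ad}(v_k)(\lambda_a) = v_k \alpha(v_k)^* \lambda_a \to u\lambda_a$, similarly for $\lambda_b$; hence $\mathrm{Ad}(v_k) \to \theta$ pointwise $*$-strongly on generators, and since these maps are $\|\cdot\|_\mu$-isometric on $M$ one upgrades convergence to all of $M$. Thus $\theta \in \overline{\mathrm{Int}}(M)$.

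For the ``only if'' direction, suppose $\theta \in \overline{\mathrm{Int}}(M)$; I must show $\theta|_D = \mathrm{id}$ and $u^1 = u^2$. The key tool is that an approximately inner automorphism acts trivially on $M_\omega = M' \cap M^\omega$ — more precisely, if $\theta = \lim \mathrm{Ad}(w_k)$ then $\theta^\omega$ fixes $M' \cap M^\omega$ pointwise. The plan is to use Lemma \ref{4.1.1} to detect a nontrivial central sequence on which $\theta$ would act nontrivially unless the two conditions hold. Concretely: if $\theta|_D \neq \mathrm{id}$, then since $D$ is a Cartan subalgebra of the AFD $\mathrm{II}_1$ factor $A$ with $\alpha$ ergodic, $\theta|_D$ is a nontrivial p.m.p.\ transformation, and one builds a centralizing sequence $(x_n) \subset D^\omega = D' \cap D^\omega$ (using the Rohlin lemma for the $\mathbf{Z}$-action $\alpha$, exactly as in the proof of Theorem \ref{3.1.3}) that is not fixed by $\theta^\omega$ — contradicting approximate innerness. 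If $\theta|_D = \mathrm{id}$ but $u^1 \neq u^2$, set $v := \lambda_a$, $w := \lambda_b$ (these satisfy $E_A(v^n) = 0 = E_A(w^n)$ for $n \neq 0$ since $\tau(\lambda_{a^n}) = 0$, and $vDv^* = D = wDw^*$); then for a central sequence $x = (x_n) \in M'\cap M^\omega$ with $x = vxw^*$ forced? That is not automatic, so instead I apply Lemma \ref{4.1.1} to a candidate $x$ built from the unitary $u^1(u^2)^*$ to show that $\theta^\omega(x) \neq x$ for a suitable central element, again contradicting $\theta \in \overline{\mathrm{Int}}(M)$.

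The main obstacle is the second sub-case ($\theta|_D = \mathrm{id}$, $u^1 \neq u^2$): one must produce an \emph{actual} central sequence witnessing the non-triviality, and make Lemma \ref{4.1.1} bite. The natural move is to consider the cocycle $u := u^1(u^2)^*$, note $\theta(\lambda_{ab^{-1}}) = u^1 \lambda_a (u^2\lambda_b)^* \cdot (\text{normalization}) = u \,\lambda_{ab^{-1}}$ up to the $D$-twist (which vanishes since $\theta|_D = \mathrm{id}$), and if $\theta$ were approximately inner then $u$ would be an approximate coboundary for the $\alpha$-action restricted appropriately — but I should instead argue at the level of $M^\omega$: a limit $\mathrm{Ad}(w_k) \to \theta$ forces $w_k^* (u^1 \lambda_a) w_k \to \lambda_a$ and $w_k^*(u^2\lambda_b)w_k \to \lambda_b$, hence $w_k^* u^1 \lambda_a (u^2 \lambda_b)^{-1} w_k \to \lambda_a \lambda_b^{-1}$; feeding the resulting almost-invariant relation into Lemma \ref{4.1.1} with $v = \lambda_a$, $w = \lambda_b$, $y_1 = y_2 \in \ker E_B$ chosen to separate $u^1$ from $u^2$ (possible since $u^1 \neq u^2$ in the diffuse $D$), the inequality yields $\|y_1(x - E^\omega(x))\|^2 = 0$ for the relevant $x$, forcing a contradiction with $u^1 \neq u^2$. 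Verifying the hypotheses of Lemma \ref{4.1.1} (the trace-preservation conditions on $v, w$ and the intertwining $x = vxw^*$) and choosing the right $y_1, y_2$ is where the real work lies; the continuity and limiting arguments are routine.
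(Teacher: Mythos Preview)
Your ``if'' direction is correct and is exactly what the paper does: the Rohlin lemma for the single ergodic transformation $\alpha$ lets one write $u^{1}=u^{2}$ as a limit of coboundaries $v_{k}\alpha(v_{k}^{*})$ with $v_{k}\in U(D)$, whence $\mathrm{Ad}(v_{k})\to\theta$ pointwise on generators and hence on $M$.

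The ``only if'' direction has a genuine gap. Your plan is to use that an approximately inner automorphism acts trivially on $M_{\omega}$ and then, in each sub-case, to manufacture a central sequence moved by $\theta$. In sub-case~1 this cannot succeed. Take any automorphism satisfying the hypotheses with $\theta|_{D}=\alpha^{n}$ for some $n\neq 0$ (such $\theta$ exist by the same construction as Lemma~\ref{3.1.1}, since $\alpha^{n}$ commutes with $\alpha$). Every element of $M_{\omega}\subset D^{\omega}$ is asymptotically $\alpha$-invariant (it must commute with $\lambda_{a}$), hence also asymptotically $\alpha^{n}$-invariant; thus $\theta$ acts trivially on $M_{\omega}$ even though $\theta|_{D}\neq\mathrm{id}$. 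This is precisely the obstruction appearing in the implication (3)$\Rightarrow$(2) of Theorem~\ref{3.1.3}, not something its proof lets you bypass. In sub-case~2 your proposed application of Lemma~\ref{4.1.1} with $v=\lambda_{a}$, $w=\lambda_{b}$ is illegitimate: that lemma requires $v,w$ to be unitaries of the \emph{same} free factor $A$, whereas $\lambda_{b}$ lies in the other one, and you never exhibit an $x$ satisfying $x=vxw^{*}$.

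The paper avoids both problems by working in $M^{\omega}$ rather than in $M_{\omega}$, and by applying Lemma~\ref{4.1.1} directly to the implementing sequence $x=(u_{n})\in M^{\omega}$ itself. One takes $v=\lambda_{a}$ and $w$ a $D$-twist of $\lambda_{a}$ (so both lie in $A$), together with $y_{1}=\lambda_{b}$ and $y_{2}$ a $D$-twist of $\lambda_{b}$ (so both lie in $\ker E_{B}$). The intertwining relations $\lambda_{a}(u_{n})=(u_{n})\,u^{1}\lambda_{a}$ and $\lambda_{b}(u_{n})=(u_{n})\,u^{2}\lambda_{b}$ in $M^{\omega}$, which come for free from $\mathrm{Ad}(u_{n})\to\theta$, supply exactly $x=vxw^{*}$ and $y_{1}x-xy_{2}=0$; the lemma then forces $x=E^{\omega}(x)\in D^{\omega}$. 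Once the implementing unitaries are known to lie in $D^{\omega}$, both $\theta|_{D}=\mathrm{id}$ and $u^{1}=\lim u_{n}\alpha(u_{n}^{*})=u^{2}$ (using that $\lambda_{a}$ and $\lambda_{b}$ implement the same $\alpha$) follow immediately, with no case split.
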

\begin{proof}
 This is shown in the proof of Theorem 14 of Ueda \cite{U2} in a more general setting. Here we give a proof briefly.

First, we show the ``only if'' part. Assume that $\theta $ is approximately inner. Then there exists a unitary $\{u_n\}$ of  $M^{\omega }$ such that $\theta (y)=\mathrm{strong}$-$\lim_{n\to \omega}u_n^* y u_n$ for $y\in M$. Then by using Lemma \ref{4.1.1} for $v=\lambda _a$, $w={u^1}^*\lambda_a$, $y_1=\lambda _b$, $y_2={u^2}^*\lambda _b$, $x=\{ u_n\}$, we have $\{u_n\} -E^\omega (\{ u_n \})=0$. Hence we have $\{u_n\} \in D^\omega $, which implies that $\theta |_D=\mathrm{id}$, and we have 
\[ u^1=\theta (\lambda _a)\lambda _a^*=\lim _{n\rightarrow \omega }u_n \lambda _a u_n^*\lambda _a^*=\lim _{n\rightarrow \omega }u_n\alpha (u_n^*)=\lim _{n\rightarrow \omega }u_n\lambda _b^*u_n^* \lambda _b =u^2.\]

Next, we show the ``if'' part. Assume that $\theta |_D=\mathrm{id}$, $u^1=u^2$. We construct a sequence $\{ u_n\}$ of unitaries of $D $ such that $u_n\alpha (u_n^*)\rightarrow u^1$. By using the Rohlin lemma for $\alpha $, there exists a partition $\{e_k\}_{k=0}^{n}\subset \mathrm{Proj}(D)$ of unity in $D$ such that 
\[\alpha (e_k)=e_{k+1} \  \text{for} \ k=1,\cdots ,n-1, \ \mu (e_0)<1/(n+1). \]
 Set 
\[ u_n=\sum _{k=0}^n v_k e_k, \\ v_1=u^1,  \\ v_{k+1}=\alpha (v_k)u^1. \]
for $k=1, \cdots , n-1$. Note that 
\[ v_ke_k\alpha (e_lv_l^*)=v_kv_{l+1}^*u^1e_ke_{l+1}=\delta _{k,l+1}u^1e_k \]
 for $k,l =1, \cdots n-1$. Hence by a similar computation to the one in the proof of Theorem \ref{3.1.3}, we have  $u_n\alpha (u_n^*)\to u^1$. Hence we have 
\[ u_n\lambda _a u_n^* \to u^1\lambda _a= \theta (\lambda_a), \]
\[ u_n\lambda _b u_n^*\to u^1\lambda _b=u^2\lambda _b=\theta(\lambda _b), \]
 which implies that  $\mathrm{Ad}u_n(y)\rightarrow \theta (y)$ strongly for $y\in M$. 
 \end{proof}
 
 \begin{lemm}
The Rohlin flows constructed in Theorem \ref{3.1.3} are completely classified by $\{\theta |_D ,u^1_t{u^2_t}^* \}$, up to strong cocycle conjugacy.
\label{4.1.2}
\end{lemm}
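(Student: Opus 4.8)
The plan is to reduce the statement to Lemma \ref{aaaa} by means of Masuda--Tomatsu's classification theorem. Let $\theta$ and $\tilde\theta$ be two Rohlin flows on $M=A*_DA$ of the kind produced by Lemma \ref{3.1.1}, attached to data $(\theta|_D,u^1,u^2)$ and $(\tilde\theta|_D,\tilde u^1,\tilde u^2)$; thus $\theta_t(\lambda_a)=u^1_t\lambda_a$, $\theta_t(\lambda_b)=u^2_t\lambda_b$, and similarly for $\tilde\theta$. Since both flows have the Rohlin property (this is the standing hypothesis, guaranteed by Theorem \ref{3.1.3}), Theorem \ref{2.3.4} says that $\theta$ and $\tilde\theta$ are strongly cocycle conjugate precisely when $\theta_t\circ\tilde\theta_{-t}\in\overline{\mathrm{Int}}(M)$ for every $t\in\mathbf{R}$. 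So it is enough to decide, for each fixed $t$, when the automorphism $\theta_t\circ\tilde\theta_{-t}$ is approximately inner, and this is exactly the situation governed by Lemma \ref{aaaa}.

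First I would check that $\theta_t\circ\tilde\theta_{-t}$ has the shape to which Lemma \ref{aaaa} applies. It globally preserves $D$, being a composite of two automorphisms with that property. Using the cocycle identity $\tilde u^1_{-t}=\tilde\theta_{-t}\big((\tilde u^1_t)^*\big)\in D$, one computes $(\theta_t\circ\tilde\theta_{-t})(\lambda_a)=\theta_t(\tilde u^1_{-t})\,u^1_t\,\lambda_a$, where $\theta_t(\tilde u^1_{-t})u^1_t\in U(D)$ because $\theta_t$ preserves $D$; likewise $(\theta_t\circ\tilde\theta_{-t})(\lambda_b)=\theta_t(\tilde u^2_{-t})\,u^2_t\,\lambda_b$ with $\theta_t(\tilde u^2_{-t})u^2_t\in U(D)$. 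Hence Lemma \ref{aaaa} applies and yields: $\theta_t\circ\tilde\theta_{-t}\in\overline{\mathrm{Int}}(M)$ if and only if $(\theta_t\circ\tilde\theta_{-t})|_D=\mathrm{id}$ and $\theta_t(\tilde u^1_{-t})u^1_t=\theta_t(\tilde u^2_{-t})u^2_t$.

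It then remains to translate these two conditions into the stated invariant. The first condition, holding for all $t$, says exactly that $\theta|_D=\tilde\theta|_D$ as flows on $D$. Granting it, we get $\theta_t(\tilde u^i_{-t})=(\theta_t\circ\tilde\theta_{-t})\big((\tilde u^i_t)^*\big)=(\tilde u^i_t)^*$ for $i=1,2$, so the second condition becomes $(\tilde u^1_t)^*u^1_t=(\tilde u^2_t)^*u^2_t$, which, $D$ being abelian, is equivalent to $u^1_t{u^2_t}^*=\tilde u^1_t{\tilde u^2_t}^*$. Therefore $\theta$ and $\tilde\theta$ are strongly cocycle conjugate if and only if $\theta|_D=\tilde\theta|_D$ and $u^1_t{u^2_t}^*=\tilde u^1_t{\tilde u^2_t}^*$ for all $t$, which is the asserted classification. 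The only points that need care are confirming that Theorem \ref{2.3.4} is applicable because both flows are Rohlin, and the bookkeeping with the cocycle relations above; the genuinely substantive content — the determination of $\overline{\mathrm{Int}}(M)$ on automorphisms of this form — has already been isolated in Lemma \ref{aaaa}, so I do not expect a serious obstacle.
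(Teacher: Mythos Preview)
Your argument is correct and follows the same approach as the paper, which simply says the lemma is immediate from Theorem \ref{2.3.4} and Lemma \ref{aaaa}; you have merely filled in the routine bookkeeping that the paper leaves implicit.
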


\begin{proof}
This lemma immediately follows from Theorem \ref{2.3.4} and Lemma \ref{aaaa}.   
\end{proof}

\begin{exam}
The Rohlin flows considered in Example \ref{3.1.4} are completely classified by $(p, q, \lambda -\mu)$, up to strong cocycle conjugacy.
\label{4.1.3}
\end{exam}

The following theorem is the main theorem of this section.

\begin{theo}
For Rohlin flows in Example \ref{3.1.4}, usual cocycle conjugacy and strong cocycle conjugacy are different.
\label{4.2.2}
\end{theo}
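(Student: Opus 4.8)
The plan is to exhibit two Rohlin flows from Example \ref{3.1.4} that are cocycle conjugate but not strongly cocycle conjugate. Since Lemma \ref{4.1.2} shows the strong cocycle conjugacy class of $\theta^{\lambda,\mu,p,q}$ is exactly the triple $(p,q,\lambda-\mu)$, the strategy is to keep $p$ and $q$ fixed and alter $\lambda-\mu$ while producing an honest cocycle conjugacy. So I would compare $\theta^{\lambda,\mu,p,q}$ with $\theta^{\lambda',\mu',p,q}$ where $\lambda-\mu\neq\lambda'-\mu'$, and build a cocycle conjugacy between them that is \emph{not} implemented by an approximately inner $\sigma$.

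The key observation is that the flow on the ``group part'' is, up to the cocycle $u^i_t$, the trivial flow on the generators, so a symmetry of $\mathbf{F}_2$ that swaps or permutes generators descends to an automorphism of $M=D\rtimes_{\alpha*\alpha}\mathbf{F}_2$ (because $\alpha*\alpha$ sends both $a$ and $b$ to $\alpha$, any automorphism of $\mathbf{F}_2$ fixing this structure extends). Concretely, I would use an automorphism of the form $\mathrm{Ad}(\lambda_g)$ for a suitable $g\in\mathbf{F}_2$, or an automorphism coming from an element of $\mathrm{Aut}(\mathbf{F}_2)$ that replaces the generator $a$ by a word $w(a,b)$ with nonzero total $a$-exponent and nonzero total $b$-exponent. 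Such an automorphism, composed with $\theta^{\lambda,\mu,p,q}$, produces a new flow whose generator-cocycles are $e^{i(m\lambda+n\mu)t}$-type scalars for integer exponents $m,n$ coming from the word, i.e.\ a flow of the form $\theta^{\lambda',\mu',p,q}$ with $\lambda'-\mu'=(m\lambda+n\mu)-(\text{other word's difference})$. Choosing the word so that $\lambda'-\mu'\neq\lambda-\mu$ (for generic $\lambda,\mu$) while the conjugating automorphism only changes $\theta$ by a coboundary gives the desired cocycle conjugacy without strong cocycle conjugacy. I would then verify via Lemma \ref{aaaa} that the implementing automorphism is genuinely not approximately inner — it moves $D$ nontrivially or changes $u^1$ versus $u^2$ in a way Lemma \ref{aaaa} forbids — so by Theorem \ref{2.3.4} the two flows cannot be strongly cocycle conjugate, while the explicit construction makes them cocycle conjugate.

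The main obstacle I expect is the bookkeeping needed to show the conjugating automorphism, after absorbing scalar $D$-valued cocycles, really lands us back inside the one-parameter family of Example \ref{3.1.4} rather than in some larger family, and to check that the resulting $(p,q)$ is unchanged (so that the flows remain Rohlin and are genuinely distinguished only by $\lambda-\mu$). In particular one must track carefully how $\theta^{\lambda,\mu,p,q}_t(\lambda_w)$ decomposes when $w$ is a nontrivial word: the $D$-valued coefficients arising from $\alpha$-twists must be shown to be coboundaries so that, after a further inner perturbation, the flow is conjugate to a ``clean'' $\theta^{\lambda',\mu',p,q}$. I would handle this using the Rohlin-lemma coboundary construction already employed in the proof of Lemma \ref{aaaa} (writing $u_n\alpha(u_n^*)\to$ the relevant unitary), which lets one trivialize any $D$-valued $\alpha$-cocycle up to approximate innerness. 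Once that reduction is in place, the inequality $\lambda-\mu\neq\lambda'-\mu'$ together with Lemma \ref{4.1.2} finishes the argument.
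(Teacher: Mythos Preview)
Your core idea is correct and in fact yields a \emph{simpler} argument than the paper's, though your write-up overcomplicates the anticipated difficulties and contains a couple of minor logical slips.

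The paper proceeds via Lemma~\ref{bbbb}: it builds an automorphism $\sigma$ of $M$ combining the involution $f(r,s)\mapsto f(-r,-s)$ on $D$ with the $\mathbf{F}_2$-automorphism $a\mapsto b^{-1}$, $b\mapsto ab^{-2}$, and computes $\sigma^{-1}\theta^{\lambda_1,\mu_1,p_1,q_1}\sigma=\theta^{-\mu_1,\lambda_1-2\mu_1,-p_1,-q_1}$. The resulting flows differ already in the $(p,q)$ part of the invariant from Example~\ref{4.1.3}, hence are not strongly cocycle conjugate. Your route --- keep $(p,q)$ fixed and change $\lambda-\mu$ via an $\mathrm{Aut}(\mathbf{F}_2)$-symmetry --- works just as well; indeed the simplest instance is the swap $a\leftrightarrow b$. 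Since $(\alpha*\alpha)_a=(\alpha*\alpha)_b=\alpha$, the swap extends to an automorphism $\sigma$ of $M$ with $\sigma|_D=\mathrm{id}$, and one computes immediately $\sigma^{-1}\theta^{\lambda,\mu,p,q}\sigma=\theta^{\mu,\lambda,p,q}$. For $\lambda\neq\mu$ the invariants $(p,q,\lambda-\mu)$ and $(p,q,\mu-\lambda)$ differ, so by Example~\ref{4.1.3} these conjugate Rohlin flows are not strongly cocycle conjugate.

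Two corrections to your sketch. First, the ``$D$-valued $\alpha$-twist'' complication you worry about does not arise: in Example~\ref{3.1.4} the cocycles $u^i_t=e^{i\lambda t},e^{i\mu t}$ are scalars, so $\theta_t(\lambda_w)=e^{i(m\lambda+n\mu)t}\lambda_w$ with no $D$-coefficient to trivialize. (Incidentally, if you do impose $\sigma|_D=\mathrm{id}$, the compatibility $(\alpha*\alpha)_{\rho(g)}=(\alpha*\alpha)_g$ forces the abelianization of $\rho$ to have column sums $1$ and determinant $\pm1$, so $\lambda'-\mu'=\pm(\lambda-\mu)$; thus only a sign flip is available, but that suffices.) Second, the suggestion $\mathrm{Ad}(\lambda_g)$ is inner and hence cannot separate cocycle conjugacy from strong cocycle conjugacy; drop it. Finally, to conclude non-strong-cocycle-conjugacy you should invoke the invariant classification (Example~\ref{4.1.3} or equivalently Theorem~\ref{2.3.4} plus Lemma~\ref{aaaa} applied to $\theta^1_t\circ(\theta^2_t)^{-1}$), not the non-approximate-innerness of the conjugating $\sigma$ itself --- the latter does not by itself rule out some \emph{other} approximately inner conjugacy.
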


The following lemma is an essential part of Theorem \ref{4.2.2}. Recall that the discrete spectrum $\mathrm{Sp}_d(\theta )$ of a flow $\theta $ on a von Neumann algebra $M$ is the set
\[ \mathrm{Sp}_d (\theta ):=\{ p\in \mathbf{R} \mid \mathrm{there} \ \mathrm{exists} \ x\in M\setminus \{ 0\} \ \mathrm{with} \ \theta _t(x) =e^{ipt}x \ \mathrm{for} \ t\in \mathbf{R}\} . \] 

\begin{lemm}
\label{bbbb}
 Let   $\theta^{\lambda_1 , \mu_1 , p_1, q_1}$, $\theta^{\lambda _2 , \mu _2 , p _2, q _2}$ be two Rohlin flows mentioned in Example \ref{3.1.4}. Then they are cocycle conjugate if there exist $r\in \textbf{R}$ and two points  $c$, $d$ of $\mathrm{Sp}_d (\theta ^{\lambda_1 , \mu_1 , p_1, q_1}\mid _D)$ such that one of the following conditions holds.

\textup{(1)} We have $(p_1,q_1)=(p_2,q_2)$ and 
\[\left(
   \begin{array}{cccc}
    \lambda _2 \\
    \mu _2 
   \end{array}
   \right) =\left(
   \begin{array}{cccc}
    \lambda _1 \\
    \mu _1 
   \end{array}
   \right) +\left(
   \begin{array}{cccc}
    r  \\
    r
   \end{array}
   \right) +\left(
   \begin{array}{cccc}
   c \\
   d
   \end{array}
   \right).\]
   
\textup{(2)} We have $(p_1,q_1)=-(p_2,q_2)$ and 
\[\left(
   \begin{array}{cccc}
    \lambda _2 \\
    \mu _2 
   \end{array}
   \right) =\left(  
          \begin{array}{@{\,}cccc@{\,}}
           0 & -1 \\
           1 & -2 
          \end{array}
         \right)\left(
   \begin{array}{cccc}
    \lambda _1 \\
    \mu _1 
   \end{array}
   \right) +\left(
   \begin{array}{cccc}
    r  \\
    r
   \end{array}
   \right) +\left(
   \begin{array}{cccc}
   c \\
   d
   \end{array}
   \right).\]
   \end{lemm}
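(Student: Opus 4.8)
Since the statement is an implication, it suffices to produce a cocycle conjugacy in each of the two cases, and the plan is to assemble it from three elementary moves: a genuine conjugacy from a ``character twist'', a strong cocycle conjugacy from a diagonal shift of $(\lambda,\mu)$, and --- only for case (2) --- a genuine conjugacy coming from an automorphism of $\mathbf{F}_2$ combined with the flip on $\mathbf{T}^2$. Throughout, note that by Example \ref{3.1.4} the Rohlin property of $\theta^{\lambda,\mu,p,q}$ depends only on $(p,q)$ (and is insensitive to replacing $(p,q)$ by $(-p,-q)$), so every flow that appears below is again a Rohlin flow.

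\emph{Move 1 (character twist).} First I would record the discrete spectrum on $D=L^\infty(\mathbf{T}^2)$: writing $f_{m,n}(r,s):=e^{2\pi i(mr+ns)}$ one has $\theta^{\lambda,\mu,p,q}_t(f_{m,n})=e^{-2\pi i(mp+nq)t}f_{m,n}$, and a Fourier expansion of an eigenvector shows $\mathrm{Sp}_d(\theta^{\lambda,\mu,p,q}\mid_D)=2\pi(\mathbf{Z}p+\mathbf{Z}q)=:\Gamma_{p,q}$, a unitary eigenvector in $U(D)$ being available for every eigenvalue. Given $c,d\in\Gamma_{p,q}$, pick $g_c,g_d\in U(D)$ with $\theta^{\lambda,\mu,p,q}_t(g_c)=e^{ict}g_c$, $\theta^{\lambda,\mu,p,q}_t(g_d)=e^{idt}g_d$, and let $\rho_{c,d}$ be the perturbation of $M=D\rtimes_{\alpha*\alpha}\mathbf{F}_2$ by the unitary $(\alpha*\alpha)$-cocycle $\mathbf{F}_2\to U(D)$ sending the generators $a,b$ to $g_c^*,g_d^*$. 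Being a unitary cocycle perturbation, $\rho_{c,d}$ is a trace-preserving automorphism of $M$ with $\rho_{c,d}|_D=\mathrm{id}$, $\rho_{c,d}(\lambda_a)=g_c^*\lambda_a$, $\rho_{c,d}(\lambda_b)=g_d^*\lambda_b$, and, using $\rho_{c,d}^{-1}(\lambda_a)=g_c\lambda_a$ and $\rho_{c,d}^{-1}(\lambda_b)=g_d\lambda_b$, a direct computation gives
\[ \rho_{c,d}\circ\theta^{\lambda,\mu,p,q}_t\circ\rho_{c,d}^{-1}=\theta^{\lambda+c,\ \mu+d,\ p,\ q}_t . \]

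\emph{Move 2 (diagonal shift).} For any $r\in\mathbf{R}$, the automorphism $\theta^{\lambda,\mu,p,q}_t\circ\theta^{\lambda+r,\mu+r,p,q}_{-t}$ fixes $D$ pointwise and sends $\lambda_a\mapsto e^{-irt}\lambda_a$, $\lambda_b\mapsto e^{-irt}\lambda_b$; by Lemma \ref{aaaa} (with $u^1=u^2=e^{-irt}1_D$) it lies in $\overline{\mathrm{Int}}(M)$, so Theorem \ref{2.3.4} shows $\theta^{\lambda,\mu,p,q}$ and $\theta^{\lambda+r,\mu+r,p,q}$ are strongly cocycle conjugate, in particular cocycle conjugate. \emph{Move 3 (flip).} Let $\kappa\in\mathrm{Aut}(D)$ be induced by $(r,s)\mapsto(-r,-s)$ on $\mathbf{T}^2$, so $\kappa\alpha\kappa^{-1}=\alpha^{-1}$, and let $\Phi\in\mathrm{Aut}(\mathbf{F}_2)$ be $\Phi(a)=a^{-2}b$, $\Phi(b)=a^{-1}$, with inverse $\Phi^{-1}(a)=b^{-1}$, $\Phi^{-1}(b)=b^{-2}a$. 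As $\Phi(a)$ and $\Phi(b)$ have exponent sum $-1$, the exponent-sum homomorphism $\epsilon$ satisfies $\epsilon\circ\Phi=-\epsilon$, hence $(\alpha*\alpha)_{\Phi(g)}=\alpha^{-\epsilon(g)}=\kappa\circ(\alpha*\alpha)_g\circ\kappa^{-1}$; therefore $d\lambda_g\mapsto\kappa(d)\lambda_{\Phi(g)}$ extends to a trace-preserving automorphism $\Psi$ of $M$. A computation of the same kind (with $\Psi^{-1}(\lambda_g)=\lambda_{\Phi^{-1}(g)}$, $\Psi^{-1}|_D=\kappa$) gives
\[ \Psi\circ\theta^{\lambda_1,\mu_1,p_1,q_1}_t\circ\Psi^{-1}=\theta^{-\mu_1,\ \lambda_1-2\mu_1,\ -p_1,\ -q_1}_t . \]

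Assembling the moves: in case (1), with $(p,q):=(p_1,q_1)=(p_2,q_2)$ and $c,d\in\Gamma_{p,q}=\mathrm{Sp}_d(\theta^{\lambda_1,\mu_1,p,q}\mid_D)$, Move 1 conjugates $\theta^{\lambda_1,\mu_1,p,q}$ onto $\theta^{\lambda_1+c,\mu_1+d,p,q}$ and Move 2 yields a cocycle conjugacy of the latter onto $\theta^{\lambda_1+c+r,\mu_1+d+r,p,q}=\theta^{\lambda_2,\mu_2,p_2,q_2}$; composition of cocycle conjugacies finishes this case. In case (2), Move 3 conjugates $\theta^{\lambda_1,\mu_1,p_1,q_1}$ onto $\theta^{-\mu_1,\lambda_1-2\mu_1,-p_1,-q_1}$; since $\Gamma_{-p_1,-q_1}=\Gamma_{p_1,q_1}$, the hypotheses on $c,d$ let Move 1 conjugate this onto $\theta^{-\mu_1+c,\lambda_1-2\mu_1+d,-p_1,-q_1}$, and Move 2 gives a cocycle conjugacy onto $\theta^{-\mu_1+c+r,\lambda_1-2\mu_1+d+r,-p_1,-q_1}=\theta^{\lambda_2,\mu_2,p_2,q_2}$; composing again finishes the proof. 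The step I expect to be the main obstacle is Move 3: one must produce $\Phi\in\mathrm{Aut}(\mathbf{F}_2)$ whose induced action on the eigenvalue pair $(\lambda,\mu)$ is \emph{exactly} the integer matrix in condition (2) --- this essentially determines the abelianization of $\Phi$, and the requirement that the $D$-part flip to $(-p_1,-q_1)$ forces $\Phi(a)$ and $\Phi(b)$ to have exponent sum $-1$, so that $(\alpha*\alpha)\circ\Phi$ becomes $\kappa$-conjugate to $\alpha*\alpha$ --- and then check that this $\Phi$ is an automorphism and that $\Psi$ descends to an automorphism of $M$. Verifying that $\rho_{c,d}$ and $\Psi$ extend from the algebraic crossed product to automorphisms of $M$ is routine since each preserves the trace, as is the identification of $\mathrm{Sp}_d(\theta^{\lambda,\mu,p,q}\mid_D)$.
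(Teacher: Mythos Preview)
Your proof is correct and follows essentially the same strategy as the paper: reduce case (2) to case (1) by conjugating with an automorphism of $M$ built from the flip $\kappa$ on $\mathbf{T}^2$ together with an automorphism of $\mathbf{F}_2$, then handle case (1) by a character twist (your Move~1) followed by the diagonal strong cocycle conjugacy (your Move~2, which is the content of Example~\ref{4.1.3}). The paper uses the automorphism $\rho$ of $\mathbf{F}_2$ with $\rho(a)=b^{-1}$, $\rho(b)=ab^{-2}$ and justifies the extension to $M$ via the universal property of the amalgamated free product, whereas you use $\Phi=\rho^{-1}$ (up to a harmless word reordering) and justify the extension via the crossed-product picture and the exponent-sum identity; both are equally valid. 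Your explicit identification $\mathrm{Sp}_d(\theta^{\lambda,\mu,p,q}\mid_D)=2\pi(\mathbf{Z}p+\mathbf{Z}q)$ with unitary eigenvectors $f_{m,n}$ is a slight streamlining of the paper's argument, which instead invokes ergodicity of $\theta\mid_D$ (a consequence of the Rohlin condition, as one sees by taking $n=0$ there) to upgrade an eigenvector to a unitary.
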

 
 \begin{proof}
 Assume that one of the above conditions holds.  First, consider the case when condition (2) holds. Let $\sigma $ be an automorphism of $D$ defined by
\[ \sigma (f)(s,t):=f(-s,-t) \]
for $f\in D$, $(s,t) \in \mathbf{T}^2$.  We show the following claim.

\textbf{Claim.}
The automorphism $\sigma $ extends to an automorphism of $M$ by 
\begin{align*}
 \sigma(\lambda _a)=& \lambda _{b^{-1}}, &\sigma(\lambda _b)=&\lambda_{ab^{-2}}. 
\end{align*}
\textit{Proof of Claim.}
 Set an automorphism $\beta $ on $D$ by $\beta :=\alpha ^{-1}$.
 Then we have  $\sigma \circ \beta \circ \sigma ^{-1} =\alpha $. Hence by Lemma 7.5 of Takesaki \cite{T1}, there exists an isomorphism $\pi _A : D\rtimes _{\alpha *\alpha }\{a\}^{\mathbf{Z}} \cong D\rtimes _\alpha \mathbf{Z} \to D\rtimes _\beta \mathbf{Z}\cong D\rtimes _{\alpha *\alpha }{\{b^{-1}\}}^{\mathbf{Z}}$ satisfying 
\[ D\ni f \mapsto \sigma (f), \\\ \lambda _{a} \mapsto \lambda _{b^{-1}}. \]
 Similarly, there exists an isomorphism $\pi _B : D\rtimes _{\alpha *\alpha}\{ b\}^{\mathbf{Z}} \cong D\rtimes _\alpha \mathbf{Z} \to D\rtimes _\beta \mathbf{Z}\cong D\rtimes _{\alpha *\alpha }{\{ab^{-2}\}}^{\mathbf{Z}}$ satisfying 
\[ D\ni f \mapsto \sigma (f),\ \lambda _{b} \mapsto \lambda _{ab^{-2}}. \] 
Note that the endomorphism $\rho $ of $\mathbf{F}_2$ defined by $a\mapsto b^{-1}$, $b\mapsto ab^{-2}$ is bijective. The inverse is given by $a\mapsto ba^{-2}$, $b\mapsto a^{-1}$. By the injectivity of $\rho$, the images of $\pi _A$ and $\pi _B$  are free over $D$ (see condition (4) of subsection \ref{2.4}). By this observation, it is easy to see that $\pi _A$ and $\pi _B$ satisfy conditions (1)--(4) of subsection \ref{2.4}. Thus by the uniqueness of the amalgamated free product, $\sigma $ extends to an automorphism of $M$.
\qed

Now we continue the proof of the Lemma. Since $\sigma ^{-1}\circ \theta ^{\lambda_1 , \mu_1 , p_1, q_1}  \circ \sigma = \theta ^{-\mu_1 , \lambda _1-2\mu_1 , -p_1, -q_1} $, by replacing $\theta ^{\lambda_1 , \mu_1 , p_1, q_1}$ by $\sigma ^{-1}\circ\theta ^{\lambda_1 , \mu_1 , p_1, q_1}\circ \sigma  $, it is enough to consider the case when condition (1) holds. Assume that condition (1) holds. Since $c\in \mathrm{Sp}_d(\theta ^{\lambda_1 , \mu_1 , p_1, q_1} \mid _D)$, there exists $u\in D$ such that  $\parallel u\parallel =1$ and $ \theta ^{\lambda_1 , \mu_1 , p_1, q_1}_t(u)=e^{ict}u$ for $t\in \mathbf{R}$. Since $u^*u$($=uu^*$) is fixed by $\theta ^{\lambda_1 , \mu_1 , p_1, q_1}$, $u^*u=uu^*=1$ by the ergodicity of $\theta ^{\lambda_1 , \mu_1 , p_1, q_1}\mid _D$. Similarly, there exists a unitary $v$ of $D$ with $\theta ^{\lambda_1 , \mu_1 , p_1, q_1}_t(v)=e^{idt}v$ for $t\in \mathbf{R}$. 

Then the identity map $\sigma $ of $D$ extends to $M$ by $\sigma (\lambda _a)=u\lambda _a$, $\sigma (\lambda _b)=v\lambda _b$. By replacing $\theta ^{\lambda_1 , \mu_1 , p_1, q_1}$ by $\sigma ^{-1} \circ\theta ^{\lambda_1 , \mu_1 , p_1, q_1}\circ \sigma $, we may assume that $c=d=0$. Hence by using Example \ref{4.1.3}, $\theta^{\lambda_1 , \mu_1 , p_1, q_1}$ and $\theta^{\lambda _2 , \mu _2 , p _2, q _2}$ are cocycle conjugate. 
 \end{proof}
 Now, we return to the proof of Theorem \ref{4.2.2}.
 
 \bigskip
 
 \textit{Proof of Theorem \ref{4.2.2}}.
Let $\theta^{\lambda_1 , \mu_1 , p_1, q_1}$ and $\theta^{\lambda _2 , \mu _2 , p _2, q _2}$ be two Rohlin flows considered in Example \ref{3.1.4}. Then by Example \ref{4.1.3}, they are strongly cocycle conjugate if and only if  $\lambda _1-\mu _1=\lambda _2-\mu _2$,  $p_1=p_2$ and $q_1=q_2$. On the other hand, by Lemma \ref{bbbb}, they are cocycle conjugate if $(p_2,q_2)=(-p_1,-q_1)$ and $(\lambda _2, \mu _2)=(-\mu _1, \lambda _1-2\mu _1)$.
\qed

\bigskip
  
We interpret this difference comes from the fact that $\overline{\mathrm{Int}}(M)$ is too small. For our flows, strong cocycle conjugacy is just perturbations by $\overline{\{ \mathrm{Ad}(u)\mid u\in U(D)\}}$. Consequently, even though Masuda--Tomatsu's analysis is applicable for flows on non-McDuff factors, they are not so powerful and another method is needed to analyze flows on non-McDuff factors.

\textbf{Acknowledgements.}
The author is thankful to Professor Reiji Tomatsu, who raises his interest in this problem and gives him  very useful advice. The author would also thank Professor Yasuyuki Kawahigashi, who is his advisor and Professor Toshihiko Masuda for carefully reading his paper and for giving him valuable comments. He would like to thank the referee for advice which greatly improves the presentation of the paper. He is supported by Leading Graduate Course for Frontiers of Mathematical Sciences and Physics of the University of Tokyo.

\end{document}